\newtheorem{lemma}{Lemma}
\newtheorem{theorem}{Theorem}
\newtheorem{corollary}{Corollary}
\newtheorem{remark}{Remark}
\newtheorem{conjecture}{Conjecture}
\newtheorem{proposition}{Proposition}
\newcommand{\dss}{\displaystyle\sum}
\newcommand{\lp}{\left (}
\newcommand{\rp}{\right )}
\newcommand{\cG}{\mathcal{G}}
\newcommand{\cI}{\mathcal{I}}
\newcommand{\CR}{{\rm cr}}
\begin{document}
%\title{On the existence of $t$ color-disjoint rainbow spanning trees in complete graphs}
\title{Anti-Ramsey number of edge-disjoint rainbow spanning trees}

\author{
Linyuan Lu
\thanks{University of South Carolina, Columbia, SC 29208,
({\tt lu@math.sc.edu}). This author was supported in part by NSF
grant DMS-1600811.} \and
Zhiyu Wang \thanks{University of South Carolina, Columbia, SC 29208,
({\tt zhiyuw@math.sc.edu}).} 
}

\maketitle
\begin{abstract}
An edge-colored graph $G$ is called \textit{rainbow} if every edge of $G$ receives a different color. The \textit{anti-Ramsey} number of $t$ edge-disjoint rainbow spanning trees, denoted by $r(n,t)$, is defined as the maximum number of colors in an edge-coloring of $K_n$ containing no $t$ edge-disjoint rainbow spanning trees. Jahanbekam and West [{\em J. Graph Theory, 2016}] conjectured that for any fixed $t$, $r(n,t)=\binom{n-2}{2}+t$ whenever $n\geq 2t+2 \geq 6$. In this paper, we prove this conjecture. We also determine $r(n,t)$ when $n = 2t+1$. Together with previous results, this gives the anti-Ramsey number of $t$ edge-disjoint rainbow spanning trees for all values of $n$ and $t$.
\end{abstract}

\section{Introduction}

An edge-colored graph $G$ is called \textit{rainbow} if every edge of $G$ receives a different color.
The general \textit{anti-Ramsey problem} asks for the maximum number of colors $AR(n,\cG)$ in an edge-coloring of $K_n$ containing no rainbow copy of any graph in a class $\cG$. For some earlier results when $\cG$ consists of a single graph, see the survey \cite{FMO}. In particular, Montellano-Baallesteros and Neumann-Lara \cite{MN} showed a conjecture of Erd\H{o}s, Simonovits and S\'os \cite{ESS} by computing $AR(n, C_k)$. Jiang and West \cite{JW} determined the anti-Ramsey number of the family of trees with $m$ edges. 

Anti-Ramsey problems have also been investigated for rainbow spanning subgraphs. In particular, Hass and Young \cite{HY} showed that the anti-Ramsey number for perfect matchings (when $n$ is even) is $\binom{n-3}{2}+2$ for $n\geq 14$. For spanning trees, Bialostocki and Voxman \cite{BV} showed that the maximum number of colors in an edge-coloring of $K_n$ $(n\geq 4)$ with no rainbow spanning tree is $\binom{n-2}{2}+1$. Jahanbekam and West \cite{West} extended the investigations to finding the anti-Ramsey number of $t$ edge-disjoint rainbow spanning subgraphs of certain types including matchings, cycles and trees. In particular, for rainbow spanning trees, let $r(n,t)$ be the maximum number of colors in an edge-coloring of $K_n$ not having $t$ edge-disjoint rainbow spanning trees. Akbari and Alipour \cite{AA} showed that $r(n,2) = \binom{n-2}{2}+2$ for $n\geq 6$. Jahanbekam and West \cite{West} showed that
	 $$r(n,t) = \begin{cases} 
	                     {n-2\choose 2}+t   & \textrm{ for } n >  2t+\sqrt{6t-\frac{23}{4}}+\frac{5}{2}\\
     			{n\choose 2}-t & \textrm{ for } n = 2t,
 		      \end{cases}$$
and they made the following conjecture:

\begin{conjecture}{\cite{West}}
$r(n,t) = \binom{n-2}{2} + t$ whenever $n\geq 2t+2 \geq 6.$
\end{conjecture}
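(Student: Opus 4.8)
The plan is to prove the conjecture $r(n,t)={n-2\choose 2}+t$ for $n\geq 2t+2\geq 6$ in two directions: a lower-bound construction showing an edge-coloring with ${n-2\choose 2}+t$ colors and no $t$ edge-disjoint rainbow spanning trees, and an upper-bound argument showing any coloring with ${n-2\choose 2}+t+1$ colors admits $t$ such trees. For the lower bound I would take two vertices $u,v$, color all ${n-2\choose 2}$ edges inside $V\setminus\{u,v\}$ with distinct colors, and then assign the remaining $2n-3$ edges incident to $u$ or $v$ only $t$ additional colors in a way that blocks $t$ edge-disjoint rainbow spanning trees; the natural choice is to make the partition $P=\{\{u\},\{v\},\text{singletons or one big class}\}$ violate the Nash-Williams/Theorem \ref{partition} count. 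Concretely, the crossing edges of the partition that isolates $u$ and $v$ and lumps everything else together are exactly the edges at $u$ or $v$, which carry only $t$ colors, while $t(|P|-1)=2t$, so no $t$ color-disjoint — but here we need edge-disjoint, not color-disjoint, so the construction must be checked against edge-disjointness directly (this is where the Jahanbekam--West construction or a variant of it comes in, and I would reuse/adapt theirs).

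For the upper bound, which is the substantive half, I would argue by contradiction: suppose $K_n$ is edge-colored with at least ${n-2\choose 2}+t+1$ colors and has no $t$ edge-disjoint rainbow spanning trees. The strategy is to first pass to a subcoloring: greedily delete edges to obtain a spanning subgraph $G$ in which every color class has size exactly one (a "rainbow subgraph"), keeping all ${n-2\choose 2}+t+1$ colors, so $G$ has exactly ${n-2\choose 2}+t+1$ edges. Now apply Theorem \ref{partition} (with the observation that for a rainbow graph, color-disjoint equals edge-disjoint, so $t$ edge-disjoint rainbow spanning trees in $G$ exist iff $t$ color-disjoint ones do) — if $G$ had $t$ color-disjoint rainbow spanning trees we would be done, so there must be a partition $P$ of $V(K_n)$ with fewer than $t(|P|-1)$ colors on crossing edges of $G$. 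Write $|P|=k$; since $G$ is rainbow, the number of crossing edges of $G$ is at most $t(k-1)-1$. On the other hand, $e(G)={n-2\choose 2}+t+1$, so the number of \emph{non-crossing} edges of $G$ is at least ${n-2\choose 2}+t+1-(t(k-1)-1)={n-2\choose 2}-tk+t+2$. But non-crossing edges lie inside the $k$ parts, and if the parts have sizes $n_1,\dots,n_k$ with $\sum n_i=n$, the number of such edges is at most $\sum{n_i\choose 2}$, which is maximized when the sizes are as unbalanced as possible.

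The heart of the argument is then a counting/optimization step: I would show that the inequality ${n-2\choose 2}-tk+t+2\leq \max\sum{n_i\choose 2}$ over partitions into $k$ parts, combined with $k\geq 2$ and $n\geq 2t+2$, forces a contradiction except possibly for $k=2$ — and for $k=2$ one needs a finer analysis. When $k=2$, the bound says $G$ has at most $t-1$ crossing edges between the two parts $A,B$; one part, say $B$, has $|B|=b$, and the non-crossing edges number at least ${n-2\choose 2}+t+1-(t-1)={n-2\choose 2}+2$, forcing ${|A|\choose 2}+{b\choose 2}\geq{n-2\choose 2}+2$, which (for $n\geq 6$) pins down $\{|A|,b\}$ to be close to $\{n-2,2\}$ or $\{n-1,1\}$ or $\{n,0\}$; the degenerate cases $b=0,1$ are easy to rule out, and in the remaining case $b=2$ one shows that having only $t-1$ colors between a $2$-set and the rest, together with being unable to build $t$ trees, contradicts $n\geq 2t+2$ directly. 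I expect the main obstacle to be handling the boundary case $k=2$, $|B|=2$ cleanly — this is exactly the regime the construction lives in, so the argument has to be tight there, and it is likely where the hypothesis $n\geq 2t+2$ (as opposed to the weaker $n> 2t+\sqrt{6t-23/4}+5/2$ of Jahanbekam--West) is exploited, presumably by invoking Theorem \ref{extension} to extend a carefully chosen family of small rainbow forests supported near $u,v$ rather than arguing from scratch.
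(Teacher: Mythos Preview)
Your proposal has a genuine structural gap at the base case $n=2t+2$. The step ``pass to a rainbow subgraph $G$ with one edge per color and apply Theorem \ref{partition}'' cannot succeed there, for a simple counting reason: such a $G$ has exactly ${n-2\choose 2}+t+1=2t^2+1$ edges, while $t$ edge-disjoint spanning trees require $t(n-1)=2t^2+t$ edges. For $t\geq 2$ the rainbow subgraph is too small to contain $t$ spanning trees no matter how the representatives are chosen, so Theorem \ref{partition} will always return a bad partition, and your subsequent counting (non-crossing edges versus $\sum{n_i\choose 2}$) cannot produce a contradiction. Indeed, at $n=2t+2$ and $k=3$ your inequality reads ${2t\choose 2}-t+2>{2t\choose 2}$, which fails for $t\geq 2$; the same obstruction persists for a range of $k$, not just $k=2$. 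You correctly sense at the end that Theorem \ref{extension} is needed ``near $u,v$'', but the issue is not localized to the $k=2$, $|B|=2$ boundary---the entire rainbow-subgraph reduction must be abandoned at $n=2t+2$.

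The paper's approach avoids this by never discarding repeated-color edges. It splits $K_n$ into $G_1$ (edges whose color has multiplicity $\geq 2$) and $G_2$ (colors of multiplicity $1$), and proves a technical lemma (Lemma \ref{lem:1}) that from $G_1$ one can extract $t$ edge-disjoint rainbow forests $F_1,\dots,F_t$ with the leftover $G_0=G_1\setminus\bigcup E(F_i)$ satisfying $|E(G_0)|\leq 2t+1$ and $\Delta(G_0)\leq t+1$. Then Theorem \ref{extension} is applied: for every partition $P$ one checks $|c(\CR(P,G_2))|+\sum_i|\CR(P,F_i)|\geq t(|P|-1)$, and the two bounds on $G_0$ are exactly what is needed to verify this for $|P|\geq 3$ and for $|P|=2$ respectively. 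The point is that the forests $F_i$ contribute crossing edges that are \emph{not} captured by any rainbow subgraph, and this extra contribution is what closes the $t-1$ edge deficit. For $n\geq 2t+3$ the paper then inducts on $n$ by deleting one or two well-chosen vertices and extending the trees back, rather than running a direct counting argument. Your counting outline would likely recover the Jahanbekam--West range $n>2t+\sqrt{6t-23/4}+5/2$ but not the full $n\geq 2t+2$.
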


In this paper, we show that the above conjecture holds and we also determine the value of $r(n,t)$ when $n = 2t+1$.  Together with previous results (\cite{BV},\cite{AA},\cite{West}), this gives the anti-Ramsey number of $t$ edge-disjoint rainbow spanning trees for all values of $n$ and $t$. 
\begin{theorem}\label{anti-Ramsey}
	For all positive integers $t$,
 $$r(n,t) = \begin{cases} 
	                     {n-2\choose 2}+t   & \textrm{ for } n \geq 2t+2\\
			{n-1\choose 2}   & \textrm{ for } n = 2t+1\\
     			{n\choose 2}-t & \textrm{ for } n = 2t,
 		      \end{cases}$$
\end{theorem}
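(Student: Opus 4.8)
The plan is to prove the three cases of the formula using Theorems \ref{partition} and \ref{extension} for the lower bounds (i.e., constructing colorings with many colors and no $t$ edge-disjoint rainbow spanning trees) and direct counting arguments for the upper bounds. For the lower bound in the case $n \geq 2t+2$, I would exhibit a coloring of $K_n$ with $\binom{n-2}{2}+t-1$ colors having no $t$ edge-disjoint rainbow spanning trees: take two vertices $u,v$, rainbow-color all $\binom{n-2}{2}$ edges inside $V\setminus\{u,v\}$ with distinct colors, and then color the $2(n-2)+1$ edges meeting $\{u,v\}$ using only $t-1$ additional colors. By Theorem \ref{partition}, it suffices to find a partition $P$ witnessing failure: the partition into $\{u\},\{v\}$ and singletons of the rest has $|P| = n$, and its crossing edges are exactly the edges meeting $\{u,v\}$, carrying only $t-1$ colors $< t(n-1)$. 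This shows $r(n,t) \geq \binom{n-2}{2}+t-1$; to push to $\binom{n-2}{2}+t$ one needs a slightly cleverer construction (e.g. using $t$ colors on the $\{u,v\}$-edges but arranged so some partition still fails, or placing one extra rainbow edge), and verifying the partition condition fails is the routine part. The cases $n = 2t+1$ and $n = 2t$ require analogous but distinct extremal constructions (for $n=2t$ one uses that $K_{2t}$ has $\binom{2t}{2} = t(2t-1)$ edges, exactly $t$ times a spanning tree's worth, so the coloring must waste colors very carefully).

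For the upper bounds, suppose $K_n$ is colored with the claimed number of colors plus one; I would show $t$ edge-disjoint rainbow spanning trees exist. The natural approach: pick a rainbow spanning forest greedily or extend iteratively, then invoke Theorem \ref{extension}. Concretely, given a coloring with many colors, first extract edge-disjoint rainbow spanning forests $F_1, \ldots, F_t$ that are "large" (covering as many colors/vertices as possible), and then verify inequality \eqref{eq:ext} for every partition $P$. The key estimate is that with $c$ colors total, the number of colors \emph{not} used by the $F_i$'s plus the forest-crossing contributions dominates $t(|P|-1)$; this reduces to a counting inequality relating $c$, $|P|$, and the structure of how colors distribute among crossing edges. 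One would bound $|c(\CR(P,G'))|$ from below by noting that $G'$ retains all colors except the (few) ones sacrificed to the forests, and each retained color class, being a nonempty set of edges in $K_n$, crosses all but at most one "large" part of $P$.

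The main obstacle I anticipate is the boundary case analysis, particularly $n = 2t+1$ and the tightness at $n = 2t+2$. When $n$ is close to $2t$, the host graph $K_n$ has only slightly more than $t(n-1)$ edges, so there is very little slack: a partition into singletons already forces the crossing-edge condition to be essentially an equality, and one must track colors at the level of individual edges. The argument that the conjectured bound is both achieved and not exceeded will hinge on a careful case split according to $|P|$ — for $|P|$ small (say $|P|=2$), the condition $|c(\CR(P))| \geq t$ is easy since a single cut in $K_n$ with $n$ large has many edges; for $|P| = n$ (all singletons) it becomes the delicate count; and intermediate $|P|$ interpolate. I would also need the observation, used by Jahanbekam–West, that in an extremal coloring one may assume a specific normalized structure (e.g., at most one edge of each "rare" color, and the rainbow part is a clique minus two vertices), which lets the partition inequality be checked against a finite family of partitions rather than all of them. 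Reconciling the $n=2t+1$ value $\binom{n-1}{2}$, which does not fit the pattern $\binom{n-2}{2}+t$ (indeed $\binom{n-1}{2} = \binom{n-2}{2} + (n-2) = \binom{n-2}{2} + (2t-1)$, strictly larger than $\binom{n-2}{2}+t$ for $t \geq 2$), will require its own tailored construction and matching upper bound, and I expect this to be the most technical portion of the proof.
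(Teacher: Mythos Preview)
Your proposal has several genuine gaps.

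First, the lower-bound argument contains an explicit error: you claim that for the partition of $V(K_n)$ into all singletons ($|P|=n$), ``its crossing edges are exactly the edges meeting $\{u,v\}$.'' But with all parts singletons, \emph{every} edge of $K_n$ is a crossing edge, so all $\binom{n-2}{2}+t-1$ colors appear there and no contradiction arises. More fundamentally, Theorem~\ref{partition} concerns \emph{color-disjoint} rainbow spanning trees, and failing its hypothesis only rules those out; $t$ \emph{edge-disjoint} rainbow spanning trees (the objects $r(n,t)$ counts) may still exist when the color-crossing condition fails, since distinct trees are permitted to share colors. The paper does not use Theorem~\ref{partition} for the lower bound at all---it simply quotes the constructions of Jahanbekam--West (Proposition~\ref{lower-bound}).

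Second, your upper-bound sketch is missing the two ideas on which the paper's argument actually rests. For the base case $n=2t+2$ (and analogously $n=2t+1$), one cannot simply ``pick rainbow spanning forests greedily'' and hope inequality~\eqref{eq:ext} holds; the paper proves a dedicated technical lemma (Lemma~\ref{lem:1}) that, after a substantial case analysis involving Hall's theorem, produces $t$ edge-disjoint rainbow forests from the multiply-used colors so that the leftover graph $G_0$ satisfies both $|E(G_0)|\le 2t+1$ and $\Delta(G_0)\le t+1$. These two bounds are exactly what is needed to verify~\eqref{eq:ext}: the edge bound handles partitions with $|P|\ge 3$ via the concavity argument, while the degree bound handles the $|P|=2$ cut isolating a single vertex---a case your sketch does not distinguish and which is precisely where naive greedy choices fail. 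For $n\ge 2t+3$, the paper does not verify~\eqref{eq:ext} directly but instead inducts on $n$: it locates a vertex (or a pair of vertices) whose deletion leaves enough colors in $K_{n-1}$ or $K_{n-2}$, applies the inductive hypothesis (ultimately Theorem~\ref{partition} on the smaller graph), and reattaches the deleted vertices via a bipartite matching argument. Your proposal contains neither the forest-construction lemma nor the induction, and without them the ``key estimate'' you describe cannot be carried out at the tight boundary $n=2t+2$.
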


\begin{remark}
	Note that if $n < 2t$, then $K_n$ does not have enough edges for $t$ edge-disjoint spanning trees. 
\end{remark}

The main tools we use are two structure theorems that characterize the existence of $t$ color-disjoint rainbow spanning trees or the existence of a \textit{color-disjoint} extension of $t$ edge-disjoint rainbow spanning forests into $t$ edge-disjoint rainbow spanning trees. When $t=1$, Broersma and Li \cite{BL} showed that determining the largest rainbow spanning forest of a graph can be solved by applying the Matroid Intersection Theorem. The following characterization was established by Schrijver \cite{Schrijver} using matroid methods, and later given graph theoretical proofs by Suzuki \cite{Suzuki} and also by Carraher and Hartke \cite{Carraher-Hartke}.

\begin{theorem}{(\cite{Schrijver, Suzuki, Carraher-Hartke})}\label{Suzuki}
An edge-colored connected graph $G$ has a rainbow spanning tree if and only if for every $2\leq k\leq n$ and every partition of $G$ with $k$ parts, at least $k-1$ different colors are represented in edges between partition classes.
\end{theorem}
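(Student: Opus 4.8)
The plan is to deduce Theorem~\ref{Suzuki} from Edmonds' matroid intersection theorem. The necessity (``only if'') direction is immediate: if $T$ is a rainbow spanning tree of $G$ and $P$ is a partition of $V(G)$ into $k$ parts, then contracting each part of $P$ turns $T$ into a connected multigraph on $k$ vertices, so $T$ has at least $k-1$ edges joining distinct parts; since $T$ is rainbow these edges carry $k-1$ distinct colors, each of them represented among the crossing edges of $P$ in $G$.

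For the ``if'' direction I would introduce two matroids on the ground set $E=E(G)$. Let $M_1$ be the cycle matroid of $G$: a set $F\subseteq E$ is independent iff $(V,F)$ is a forest, and its rank is $r_1(F)=|V|-c(V,F)$, where $c(\cdot)$ denotes the number of connected components. Let $M_2$ be the partition matroid whose parts are the color classes, each with capacity $1$; then $F$ is independent in $M_2$ iff $F$ is rainbow, and $r_2(F)$ equals the number of distinct colors occurring on $F$. Writing $n=|V(G)|$, a rainbow spanning tree is exactly a common independent set of $M_1$ and $M_2$ of size $n-1$, and since $G$ is connected no common independent set can be larger. By the matroid intersection theorem such a set exists if and only if
\[
  r_1(A)+r_2(E\setminus A)\ \ge\ n-1\qquad\text{for every }A\subseteq E .
\]

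The core of the proof is to match this inequality with the partition hypothesis. Fix $A\subseteq E$, and let $V_1,\dots,V_k$ be the vertex sets of the components of $(V,A)$, so that $k=c(V,A)$ and $r_1(A)=n-k$; the inequality to be checked becomes $r_2(E\setminus A)\ge k-1$. Regarding $P=\{V_1,\dots,V_k\}$ as a partition of $V(G)$, every edge of $A$ lies within a single part, hence all crossing edges of $P$ belong to $E\setminus A$; therefore $r_2(E\setminus A)$ is at least the number of colors on the crossing edges of $P$, which is $\ge k-1$ by hypothesis (the case $k=1$ being vacuous). This establishes the matroid intersection condition and hence produces a rainbow spanning tree. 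The same dictionary re-proves necessity in one line: if a partition $P=\{V_1,\dots,V_k\}$ has at most $k-2$ colors on its crossing edges, take $A$ to be the set of non-crossing edges; then $c(V,A)\ge k$, so $r_1(A)\le n-k$ and $r_2(E\setminus A)\le k-2$, giving $r_1(A)+r_2(E\setminus A)\le n-2$ and no rainbow spanning tree.

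The mathematical content is essentially all carried by the matroid intersection theorem, which I would quote as a black box, so the main obstacle is expository bookkeeping rather than a hard step: one must be careful that $c(V,A)$ may be strictly larger than the number of parts of a would-be partition when parts induce disconnected subgraphs (this only strengthens the required inequality, so it is harmless), and that the partition read off from $A$ really confines all of $A$ inside its parts. If a self-contained, matroid-free argument were wanted, one could instead follow Suzuki or Carraher--Hartke: take a maximum rainbow forest, and if it fails to span, run color-exchange augmentations; analyzing a minimal counterexample then exposes a violating partition. That route is more elementary but considerably more notation-heavy, and I would prefer the matroid intersection proof for its brevity.
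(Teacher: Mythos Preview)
Your argument is correct: the reduction to Edmonds' matroid intersection with $M_1$ the cycle matroid and $M_2$ the color partition matroid is exactly Schrijver's route, and your verification that the min-max formula matches the partition hypothesis is clean (including the observation that $c(V,A)$ can only exceed the number of intended parts, which works in your favor).

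The paper, however, does not prove Theorem~\ref{Suzuki} on its own; it quotes it and instead proves the $t$-fold generalization (Theorem~\ref{partition}), from which the $t=1$ case recovers Theorem~\ref{Suzuki}. That proof is purely graph-theoretical in the spirit of Suzuki: one takes a maximal (in a carefully defined preorder) family of $t$ color-disjoint rainbow spanning forests, runs an iterative deletion process that removes from each forest any edge whose color appears among the crossing edges of some other forest, and argues via an exchange claim that at the end all forests induce a common partition $P$ with $|c(\CR(P))|\le t(|P|-1)-1$, contradicting the hypothesis. What your matroid route buys is brevity and a one-line dictionary between the partition condition and the rank inequality; what the paper's route buys is a self-contained combinatorial argument that does not invoke matroid intersection as a black box and, more to the point here, extends uniformly to $t\ge 2$ color-disjoint trees, which a single application of matroid intersection does not give (one would need matroid union or a more elaborate setup to reach Theorem~\ref{partition} that way).
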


The above results can be generalized to $t$ \textit{color-disjoint} rainbow spanning trees using similar matroid methods by Schrijver \cite{Schrijver}. For the sake of self-completeness, we reproduce the proof using matroid methods in Section \ref{sec:color-disjoint}. We also give a new graph theoretical proof of Theorem \ref{partition}.

\begin{theorem}\cite{Schrijver}\label{partition}
An edge-colored multigraph $G$ has $t$ pairwise color-disjoint rainbow spanning trees if and only if for every partition $P$ of $V(G)$ into $|P|$ parts, at least $t(|P|-1)$ distinct colors are represented in edges between partition classes.
\end{theorem}

\begin{remark}
Recall the famous Nash-Williams-Tutte Theorem (\cite{Nash, Tutte}): A multigraph contains $t$ edge-disjoint spanning trees if and only if for every partition $P$ of its vertex set, it has at least $t(|P|-1)$ cross-edges. Theorem \ref{partition} implies the Nash-Williams-Tutte Theorem by assigning every edge of the multigraph a distinct color.
\end{remark}

Theorem \ref{partition} can be also generalized to extend edge-disjoint rainbow spanning forests to edge-disjiont rainbow spanning trees.
Let $G$ be an edge-colored multigraph. Let $F_1, \ldots, F_t$ be $t$ edge-disjoint
rainbow spanning forests. We are interested in whether $F_1,\ldots, F_t$ can be extended to
$t$ edge-disjoint rainbow spanning trees $T_1,\ldots, T_t$ in $G$, i.e., $E(F_i)\subset E(T_i)$ for each $i$.
We say the extension is {\em color-disjoint} if all edges in $\cup_i\lp E(T_i)\setminus E(F_i)\rp$
  have distinct colors and these colors are different from the colors appearing in the edges of $\cup_i E(F_i)$. Using similar matroid methods or graph theoretical arguments, we can also obtain a criterion that characterizes the existence of a color-disjoint extension of rainbow spanning forests into rainbow spanning trees.

  \begin{theorem}\label{extension}
    A family of $t$ edge-disjoint rainbow spanning forests $F_1, \ldots, F_t$ 
    has a color-disjoint extension in $G$ if and only if for every partition $P$ of $G$ into $|P|$ parts,
    \begin{equation}
      \label{eq:ext}
      |c(\CR(P,G'))|+\sum_{i=1}^t|\CR(P, F_i)|\geq t(|P|-1).
    \end{equation}
    Here $G'$ is the spanning subgraph of $G$ by removing all edges with colors appearing in some $F_i$,
    and $c(\CR(P,G'))$ be the set of colors appearing in the edges of $G'$ crossing the partition $P$.
  \end{theorem}

It would be interesting to find a similar criterion for the existence of $t$ edge-disjoint rainbow trees in a general graph since applications of Theorem \ref{partition} and Theorem \ref{extension} usually require large number of colors in the host graph.  

\vspace{0.2cm}

{\noindent}{\bf Organization:} The rest of the paper is organized as follows. In Section \ref{sec:color-disjoint}, we present the proofs of Theorem \ref{partition} and Theorem \ref{extension}. In Section \ref{sec:main}, we show Theorem \ref{anti-Ramsey}.

\section{Proof of Theorem \ref{partition}}\label{sec:color-disjoint}
We first reproduce the proof of Theorem \ref{partition} using matroid methods. A matroid is defined as $M = (E, \cI)$ where $E$ is the ground set and $\cI\subseteq 2^{E}$ is a set containing subsets of $E$ (called indepedent sets) that satisfy (i) if $A \subseteq B\subseteq E$, and $B\in \cI$, then $A\in \cI$; (ii) if $A\in \cI$, $B\in \cI$ and $|A| > |B|$, then $\exists\; a \in A\backslash B$ such that $B \cup \{a\} \in \cI$. Given a matroid $M = (E, \cI)$, the rank function $r_M: 2^E \to \mathbb{N}$ is defined as $r_M(S) = \max\{|I|: I\subseteq S, I\in \cI\}$. Thus $r_M(E)$ is the size of the maximum independent set of $M$. 
% A subset $X\subseteq E$ of $M$ is called \textit{closed} if $\{x\in E: r_M(X \cup x) = r_M(X)\} = X$.
Two matroids of interests here are the \emph{graphic matroid} and the \emph{partition matroid}. Given an edge-colored graph $G$, the graphic matroid of $G$ is the matroid $M = (E, \cI)$ where $E = E(G)$ and $\cI$ is the set of forests in $G$. The partition matroid of $G$, is the matroid $M' = (E',\cI')$ where $E' = E(G)$ and $\cI$ is the set of rainbow subgraphs of $G$. Given $k$ matroids $\{M_i = (E_i, \cI_i)\}_{i\in[k]}$, one can define the \emph{union} of the $k$ matroids, $M_1 \vee \cdots \vee M_k = (E,\cI)$, by 
$E = \bigcup_{i=1}^k E_i$ and $\cI = \{I_1 \cup \cdots \cup I_k: I_i \in \cI_i \textrm{ for all $i \in [k]$}\}$. It is well known in matroid theory \cite{Edmondsu, Nash0} that $M_1 \vee \cdots \vee M_k$ is a matroid with rank function 
$$r(S) = \min_{T\subseteq S} \lp |S\backslash T| + \dss_{i=1}^k r_{M_i} (T\cap E_i) \rp.$$
% Observe that we can
% always restrict our attention to sets $T$ which are closed for matroid $M_i$ for $i\in [k]$.
Given two matroids $M_1 = (E, \cI_1)$ and $M_2 = (E, \cI_2)$ on the same ground set with rank functions $r_1$ and $r_2$ respectively, consider the family of independent sets common to both matroids, i.e., $\cI_1 \cap \cI_2$.  The well-known Matroid Intersection Theorem \cite{Edmondsi} asserts that
$$\max_{I\in \cI_1 \cap \cI_2} |I| = \min_{U\subseteq E} \lp r_1(U) + r_2(E\backslash U)\rp.$$
% Observe that we can always restrict our attention to sets $U$ which are closed for matroid $M_1$.

\subsection{Proof of Theorem \ref{partition} using Matroid methods}
Again we remark that the proof essentially follows the same approaches as Schrijver \cite{Schrijver} and we only reproduce it here for the sake of completeness. 

\begin{proof}[Proof of Theorem \ref{partition}]
The forward direction is clear. Thus it remains to show that if for every partition $P$ of $V(G)$ into $|P|$ parts, at least $t(|P|-1)$ distinct colors are represented in edges between partition classes, then there exist $t$ edge-disjoint rainbow spanning trees in $G$.

Given an edge-colored graph $G$, let $M = (E, \cI)$ be the graphic matroid of $G$ and $M' = (E, \cI')$ be the partition matroid of $G$. Moreover, let $M^t = M \vee M \vee \cdots \vee M = (E, \cI^t)$, where we take $t$ copies of $M$. By the matriod union theorem, we obtain that 
$$r_{M^t}(S) = \min_{T\subseteq S} \lp |S\backslash T| + t \cdot  r_{M}(T)\rp.$$
By the Matroid Intersection Theorem, 

\begin{align*}
    \max_{I \in \cI^t \cap \cI'} |I| &= \min_{U\subseteq E} \lp r_{M^t}(U) + r_{M'}(E\backslash U)\rp\\
                                     &= \min_{U\subseteq E} \lp \min_{T\subseteq U} \lp |U\backslash T| + t \cdot  r_{M}(T)\rp + r_{M'}(E\backslash U)\rp.
\end{align*}
Let $T, U\subseteq E$ be arbitrarily chosen such that $T\subseteq U$. 
Observe that 
$t \cdot r_M(T) = t(n-q(T))$,
where $q(T)$ is the number of components of $G[T]$.
Now we claim that $$|U\backslash T| + r_{M'}(E\backslash U) \geq r_{M'}(E\backslash T)\geq t(q(T)-1).$$ Indeed, for any color $c$ appearing in some edge  $e\in E\backslash T$, if $e\in E\backslash U$, then the color $c$ is counted in $r_{M'}(E\backslash U)$; if $e \in U$, then that color is counted in $|U\backslash T|$. 
In particular, at least $t(q(T)-1)$ distinct colors are represented in edges between connected components of $T$, thus in $E\setminus T$. 
It follows that 
$$|U\backslash T| + t \cdot  r_{M}(T) + r_{M'}(E\backslash U) \geq t(q(T)-1) + t(n-q(T)) \geq t(n-1),$$
which implies that $\max_{I \in \cI^t \cap \cI'} |I| \geq t(n-1)$.
% Note that for any $U\subseteq E$, 
% \begin{equation}\label{eq:union}
%     \min_{T\subseteq U} \lp |U\backslash T| + t \cdot  r_{M}(T)\rp = \min_{T\subseteq U} |U\backslash T| + t (n-q(T))
% \end{equation}
% where $q(T)$ is the number of components of $G[T]$. It is easy to observe that \eqref{eq:union} is minimized when $T= U$. It follows that 
% \begin{align*}
%     \max_{I \in \cI^t \cap \cI'} |I| &= \min_{U\subseteq E} \lp t(n-q(U)) + r_{M'}(E\backslash U)\rp
% \end{align*}                                
% Note that since for every partition $P$ of $V(G)$ into $|P|$ parts, we have at least $t(|P|-1)$ distinct colors crossing the parts, it follows that for every $U\subseteq E$,
% $$r_{M'}(E\backslash U) \geq t(q(U)-1).$$
% Rearranging the equation, we have that for every $U \subseteq E$, 
% $$t(n-1) \leq  t(n-q(U))  + r_{M'}(E\backslash U),$$
% which implies that 
% $$t(n-1) \leq \min_{U\subseteq E} \lp t(n-q(U)) + r_{M'}(E\backslash U)\rp =  \max_{I \in \cI^t \cap \cI'} |I|. $$
   By definition, we then have $t$ edge-disjoint rainbow spanning trees.

\end{proof}

\subsection{Proof of Theorem \ref{partition} using graph theoretical arguments}
In this subsection, we give a new graph theoretical proof of Theorem \ref{partition}.
Given a graph $G$, we use $V(G),E(G)$ to denote its vertex set and edge set respectively. We use $\|G\|$ to denote the number of edges in $G$. Given a set of edges $E$, we use $c(E)$ to denote the set of colors that appear in $E$. For clarity, we abuse the notation to use $c(e)$ to denote the color of an edge $e$. We say a color $c$ has \textit{multiplicity} $k$ in $G$ if the number of edges with color $c$ in $G$ is $k$. The \textit{color multiplicity} of an edge in $G$ is the multiplicity of the color of the edge in $G$.

For any partition $P$ of the vertex set $V(G)$ and a subgraph $H$ of $G$, let $|P|$ denote the number of parts in the partition $P$ and let $\CR(P,H)$ denote the set of crossing edges in $H$ whose end vertices belong to different parts in the partition $P$. When $H=G$, we also write $\CR(P,G)$ as $\CR(P)$.
Given two partitions $P_1\colon V =\cup_i V_i$ and $P_2\colon V=\cup_j V'_j$, let the intersection $P_1\cap P_2$ denote the partition given by $V=\bigcup\limits_{i,j} V_i\cap V'_j$.  Given a spanning disconnected subgraph $H$, there is a natural partition $P_H$ associated to $H$, which partitions $V$ into its connected components. Without loss of generality, we abuse our notation $\CR(H)$ to denote
the crossing edges of $G$ corresponding to this partition $P_H$. 
Recall we want to show that an edge-colored multigraph $G$ has $t$ color-disjoint rainbow spanning trees if and only if for any partition $P$ of $V(G)$ (with $|P| \geq 2$),

\begin{equation}\label{eq:partition}
	|c(cr(P))| \geq t(|P|-1).
\end{equation}

\begin{proof}[Proof of Theorem \ref{partition}]
  One direction is easy. Suppose that $G$ contains $t$ pairwise color-disjoint rainbow  spanning trees $T_1, T_2, \ldots, T_t$. Then all edges in these trees have distinct colors. For any partition $P$ of the vertex set $V$, each tree contributes at least $|P|-1$ crossing edges, thus $t$ trees contribute  at least $t(|P|-1)$ crossing edges and the colors of these edges are all distinct.

  Now we prove the other direction. Assume that $G$ satisfies inequality
  \eqref{eq:partition}. We would like to prove $G$ contains $t$ pairwise color-disjoint rainbow  spanning trees. We will prove by contradiction.
  Assume that $G$ does not contain $t$ pairwise  color-disjoint rainbow spanning trees. Let ${\cal F}$ be the collection of all families of
  $t$ color-disjoint rainbow spanning forests $\{F_1, \cdots, F_t\}$. Consider the following deterministic process:

  \begin{center}
    \begin{tabbing}
      mmmm\=mmmm\=mmmm\=mmmm\=mmmm\= \kill
      Initially, set $C':=\bigcup\limits_{j=1}^t c(\CR(F_j))$\\
      {\bf while } $C'\not=\emptyset$ {\bf do}\\ 
      \>{\bf for } each color $x$ in $C' $, {\bf do}\\
      \>\> {\bf for } $j$ from 1 to $t$, {\bf do}\\
      \>\>\>{\bf if } color $x$ appears in $F_j$, {\bf then }\\
      \>\>\>\> delete the edge in color $x$ from $F_j$\\
      \>\>\>{\bf endif}\\
      \>\>{\bf endfor}\\
      \>{\bf endfor}\\
      \>set $C':=\bigcup\limits_{j=1}^t c(\CR(F_j))-C'$\\
      {\bf endwhile}\\
    \end{tabbing}
  \end{center}

For $i \geq 0$, $F_j^{(i)}$ denote the rainbow spanning forest $F_j$ after $i$ iterations of the while loop. In particular, $F_j^{(0)} = F_j$ for all $j\in [t]$ and $F_j^{(\infty)}$ is the resulting rainbow spanning forest of $F_j$ after the process. Similarly, let $C_i$ denote the set $C'$ after the $i$-th iteration of the while loop. Note that $C_i$ is the set of new colors crossing components of $F_j$s after some edges are deleted in the $i$-th iteration.

Observe that since the procedure is deterministic, $\{ F_j^{(i)}:  j\in [t], i>0\}$ is unique for a fixed family $\{F_1, \cdots, F_t\}$. We define a {\em preorder} on ${\cal F}$.
We say a family $\{F_j\}_{j=1}^t$ is less than or equal to
another family $\{F'_j\}_{j=1}^t$ if there is a positive integer $l$ such that
\begin{enumerate}
\item For $1\leq i<l$, $\dss_{j=1}^t \| F_j^{(i)}\|=\dss_{j=1}^t \| {F'}_j^{(i)}\|$.
\item $\dss_{j=1}^t \| F_j^{(l)}\| < \dss_{j=1}^t \| {F'}_j^{(l)}\|$.
\end{enumerate}

Since $G$ is finite, so is $\cal F$. There exists a maximal element
$\{F_1, F_2, \cdots, F_t\} \in {\cal F}$.  Run the deterministic process on $\{F_1, F_2, \cdots, F_t\}$.

The goal is to construct a common partition $P$ by refining
  $\CR(F_j)$ so that $|c(\CR(P))| < t(|P|-1)$. In particular, we will show that all forests  in $\{F^{(\infty)}_j: j\in [t]\}$ admit the same partition $P$.

  {\bf Claim (a):} $\bigcup\limits_{j=1}^t c \lp\CR(F^{(i)}_j)\rp \subseteq \lp \bigcup\limits_{j=1}^t c \lp\CR(F^{(i-1)}_j)\rp \rp\cup
  \lp\bigcup\limits_{j=1}^t c(F^{(i)}_j)\rp$.

  Assume for the sake of contradiction that there is a color $x\in \bigcup\limits_{j =1}^t c ( \CR(F_j^{(i)})) \backslash \bigcup\limits_{j =1}^t c(\CR(F^{(i-1)}_j))$ and there is no edge in color $x$ in all forests $F^{(i)}_1,\ldots, F^{(i)}_t$. Let $e$ be the edge such that $c(e)=x$ and $e\in \CR(F_s^{(i)})$ for some $s \in [t]$. Observe that since $c(e) \notin \bigcup\limits_{j =1}^t c(\CR(F^{(i-1)}_j))$, it follows that $F_s^{(i-1)}+ e$ contains a rainbow cycle, which passes through $e$ and another edge $e' \in F_s^{(i-1)}$ joining two distinct components of $F_s^{(i)}$. Now let us consider a new family of rainbow spanning forests $\{F_1', \cdots, F_t'\}$ where $F_j' = F_j$ for $j \neq s$ and $F_s' = F_s - e' + e$. The color-disjoint property is maintained since the color of edge $e$ is not in any $F_j$.
  Observe that since $c(e) \notin \bigcup\limits_{j =1}^t c(\CR(F^{(i-1)}_j))$, $F_s'^{(i)}$ will have one fewer component than $F_s^{(i)}$. Thus we have
\[\dss_{j=1}^t \| F_j^{(k)}\| = \dss_{j=1}^t \| F_j'^{(k)}\| \text{ for } k < i.\] 
\[\dss_{j=1}^t \| F_j'^{(i)}\| > \dss_{j=1}^t \| F_j^{(i)}\|.\] 
which contradicts our maximality assumption of $\{F_i: i\in [t]\}$. That finishes the proof of Claim $(a)$.

Claim (a) implies that for each $x \in C_i$, there is an edge $e$ of color $x$ in exactly one of the forests in $\{F_j^{(i)}: j\in [t]\}$. Thus removing that edge in the next iteration will increase the sum of number of partitions exactly by $1$. Thus we have that $$\dss_{j=1}^t|P_{F^{(i+1)}_j}|= \dss_{j=1}^t |P_{F^{(i)}_j}|+ |C_{i}|.$$

It then follows that
   \begin{align*}
     \sum_{j=1}^t|P_{F_j^{(\infty)}}| &= \sum_{j=1}^t |P_{F_j}|+ \sum_{i}|C_i|\\
     &= \sum_{j=1}^t |P_{F_j}|+ |\bigcup\limits_{j=1}^t c(\CR(F_j^{(\infty)}))|.
   \end{align*}
   
   Finally set the partition $P=\bigcap\limits_{j=1}^t P_{F^{(\infty)}_j}$.
   We claim $P_{F_j^{(\infty)}}=P$ for all $j$. This is because all edges in  $cr(P_{F_j^{(\infty)}}) \cap \bigcup\limits_{k=1}^t E(F_k^{(\infty)})$ have been already removed. We then have
   \begin{align*}
     t|P| &= \sum_{j=1}^t|P_{F^{(\infty)}_j}| \\
          &= \sum_{j=1}^t |P_{F_j}|+ |\bigcup\limits_{j=1}^t c(\CR(F^{(\infty)}_j))| \\
          &= \sum_{j=1}^t |P_{F_j}|+ |c(\CR(P))|\\
          &\geq t+1+ |c(\CR(P))|.
   \end{align*}
   We obtain
   $$ |c(\CR(P))| \leq t(|P|-1)-1.$$
   Contradiction.

\end{proof}   

\begin{corollary}
The edge-colored complete graph $K_n$ has $t$ color-disjoint rainbow spanning trees if the number of edges colored with any fixed color is at most $n/(2t)$.
\end{corollary}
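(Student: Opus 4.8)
The plan is to verify the partition condition of Theorem~\ref{partition}. Fix an edge-coloring of $K_n$ in which every color class has size at most $n/(2t)$, and let $P$ be an arbitrary partition of $V(K_n)$ with $|P|=k\geq 2$ parts, of sizes $n_1,\dots,n_k$ with $\sum_{i=1}^k n_i = n$. I must show that the crossing edges $\CR(P)$ use at least $t(k-1)$ distinct colors. Since each color appears on at most $n/(2t)$ edges, it suffices to show
\begin{equation*}
  |\CR(P)| \;\geq\; t(k-1)\cdot \frac{n}{2t} \;=\; \frac{n(k-1)}{2},
\end{equation*}
because then $|c(\CR(P))| \geq |\CR(P)| \big/ (n/(2t)) \geq t(k-1)$.

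So the whole task reduces to the purely extremal estimate $|\CR(P)| \geq n(k-1)/2$ for any partition of $K_n$ into $k$ parts. The number of crossing edges is $\binom{n}{2} - \sum_{i=1}^k \binom{n_i}{2} = \tfrac12\big(n^2 - \sum_i n_i^2\big)$, so I need $\sum_{i=1}^k n_i^2 \leq n^2 - n(k-1) = n(n-k+1)$. This is a standard convexity/smoothing argument: among all integer compositions of $n$ into $k$ positive parts, $\sum n_i^2$ is maximized when the distribution is as unbalanced as possible, i.e. one part of size $n-k+1$ and $k-1$ parts of size $1$, giving $\sum n_i^2 = (n-k+1)^2 + (k-1) \le n(n-k+1)$; the last inequality rearranges to $(k-1)(n-k+1) \ge (k-1)$, i.e. $n-k+1 \ge 1$, which holds since $k \le n$. (Concretely: if two parts have sizes $a \ge b \ge 2$, replacing them by $a+1$ and $b-1$ strictly increases $\sum n_i^2$, so the maximum occurs when at most one part exceeds size $1$.) That gives exactly $|\CR(P)| \ge n(k-1)/2$, completing the verification.

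There is no real obstacle here — the only mild subtlety is being careful that the bound $n/(2t)$ on color multiplicities need not be an integer, but that is harmless: we use it only as an upper bound on color class sizes, so $|c(\CR(P))| \ge |\CR(P)|/(n/(2t)) = 2t|\CR(P)|/n \ge t(k-1)$, and since $|c(\CR(P))|$ is an integer this yields $|c(\CR(P))| \ge t(k-1)$ as required. Having verified the partition hypothesis for every $P$, Theorem~\ref{partition} immediately yields $t$ color-disjoint rainbow spanning trees in $K_n$, which is the claim. I would also remark that the edge count $\binom{n}{2}$ of $K_n$ comfortably exceeds $t(n-1)$ under this hypothesis (since having many small color classes forces $\binom n2 \ge (2t)\cdot(\text{number of colors})$ is not what we need, but rather the partition bound does all the work), so no separate feasibility check is needed.
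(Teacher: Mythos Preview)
Your proof is correct and follows essentially the same approach as the paper's: both arguments bound $|\CR(P)|$ from below by $\binom{n}{2}-\binom{n-k+1}{2}$ (the extremal partition with one large part and $k-1$ singletons) and then apply the color-multiplicity bound $n/(2t)$ together with Theorem~\ref{partition}. The only cosmetic difference is that the paper phrases this by contradiction---assuming $|c(\CR(P))|\leq t(r-1)-1$ and deriving $(n-r)(r-1)\leq -n/t$---whereas you verify the partition condition directly.
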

\begin{proof}
	Suppose $K_n$ does not have $t$ color-disjoint rainbow spanning trees, then there exists a partition $P$ of $V(K_n)$ into $r$ parts ($2\leq r \leq n$) such that the number of distinct colors in the crossing edges of $P$ is at most $t(r-1)-1$. Let $m$ be the number of edges crossing the partition $P$. It follows that 
		$$m \leq \lp t(r-1)-1 \rp \cdot \frac{n}{2t}  \leq \frac{n}{2}(r-1) -\frac{n}{2t}. $$
	On the other hand, 
	$$m \geq \binom{n}{2} - \binom{n-(r-1)}{2}.$$
	Hence we have 
	$$\binom{n}{2} - \binom{n-(r-1)}{2} \leq \frac{n}{2}(r-1) -\frac{n}{2t}.$$ which implies
	$$(n-r)(r-1) \leq -\frac{n}{t}.$$ which contradicts that $2\leq r\leq n$.
\end{proof}

{\bf Remark:} This result is tight since the total number of colors used in $K_n$ could be
as small as ${n\choose 2}/(n/(2t))= t(n-1),$  but
any $t$ color-disjoint rainbow spanning trees need $t(n-1)$ colors.
On the contrast, a result by Carraher, Hartke and Horn \cite{CHH} implies
there are $\Omega(n/\log n)$ edge-disjoint rainbow spanning trees.

%%%%%%%%%%%%%%%%%%%%%%%%%%%%%%%%%%%%%%%%%%%%%%%%%%%
% Theorem 3
%%%%%%%%%%%%%%%%%%%%%%%%%%%%%%%%%%%%%%%%%%%%%%%%%%%%

\subsection{Proof of Theorem \ref{extension}}
  
  Recall we want to show that any $t$ edge-disjoint rainbow spanning forests $F_1, \ldots, F_t$ have a color-disjoint extension to edge-disjoint rainbow spanning trees in $G$ if and only if

  $$ |c(\CR(P,G'))|+\sum_{j=1}^t|\CR(P, F_j)|\geq t(|P|-1).$$
  where $G'$ is the spanning subgraph of $G$ by removing all edges with colors appearing in some $F_j$.

  \begin{proof}
 	Again, the forward direction is trivial. We only need to show that condition \eqref{eq:ext} implies there exists a color-disjoint extension to edge-disjoint rainbow spanning trees.
    The proof is similar to the proof of Theorem \ref{partition}. Consider a set of edge-maximal forests $F^{(0)}_1,\ldots, F^{(0)}_t$ which is a color-disjoint extension of $F_1, \ldots, F_t$.
    From $\{F^{(0)}_j\}$ we delete
    all edges (in $\{F^{(0)}_j\}$) of some color $c$ appearing in $\bigcup_{j=1}^tc(\CR(F^{(0)}_j,G'))$ to get
      a new set $\{F^{(1)}_j\}$. Repeat this process until we reach a stable set $\{F^{(\infty)}_j\}$.
      Since we only delete edges in $G'$, we have $E(F_j)\subseteq E(F^{(\infty)}_j)$ for each $1\leq j \leq t$.
      The edges and colors in $\cup_{j=1}^t E(F_j)$ will not affect the process. A similar claim still holds:

      $$\bigcup\limits_{j=1}^t c(\CR(F^{(i)}_j, G')) \subseteq  \lp \bigcup\limits_{j=1}^t c(\CR(F^{(i-1)}_j,G')) \rp \cup
  \lp\bigcup\limits_{j=1}^t c \lp E(F^{(i)}_j)\cap E(G')\rp\rp.$$

  In particular, let $C_i = \lp\bigcup_{j=1}^t c(\CR(F^{(i)}_j,G')) \rp \backslash \lp \bigcup_{j=1}^t c(\CR(F^{(i-1)}_j,G')) \rp $. Then we have
    $$\dss_{j=1}^t|P_{F^{(i+1)}_j}| = \dss_{j=1}^t |P_{F^{(i)}_j}|+ |C_{i}|.$$ 
It then follows that
   \begin{align*}
     \sum_{j=1}^t|P_{F_j^{(\infty)}}| &= \sum_{j=1}^t |P_{F_j^{(0)}}|+ \sum_{i}|C_i|\\
     &= \sum_{j=1}^t |P_{F_j^{(0)}}|+ |\bigcup\limits_{j=1}^t c(\CR(F_j^{(\infty)},G'))|.
   \end{align*}
   
    Finally set the partition $P=\bigcap\limits_{j=1}^t P_{F^{(\infty)}_j \backslash E(F_j)}$.
  Clearly all edges in $\CR(P, G')$ are removed.
  All possible edges remaining in $G$ that cross the partition $P$ are exactly the edges in $\bigcup\limits_{j=1}^t\CR(P, F_j)$.

   We have
   \begin{align*}
     t|P| &= \sum_{j=1}^t|P_{F^{(\infty)}_j}| + \sum_{j=1}^t|\CR(P, F_j)| \\
          &= \sum_{j=1}^t |P_{F_j^{(0)}}|+ |\bigcup\limits_{j=1}^t c(\CR(F^{(\infty)}_j,G'))| + \sum_{j=1}^t|\CR(P, F_j)|\\
          &= \sum_{j=1}^t |P_{F_j^{(0)}}|+ |c(\CR(P,G'))|+  \sum_{j=1}^t|\CR(P, F_j)|\\
          &\geq t+1+ |c(\CR(P,G'))|+ \sum_{j=1}^t|\CR(P, F_j)|.
   \end{align*}
   We obtain
   $$|c(\CR(P,G'))|+ \sum_{j=1}^t|\CR(P, F_j)|\leq t(|P|-1)-1.$$
Contradiction.

  \end{proof}

%%%%%%%%%%%%%%%%%%%%%%%%%%%%%%%%%%%%%%%%%%%%%%%%%%%
%Application Application Application  
%%%%%%%%%%%%%%%%%%%%%%%%%%%%%%%%%%%%%%%%%%%%%%%%%%%
\section{Proof of Theorem \ref{anti-Ramsey}}\label{sec:main}

Recall that $r(n,t)$ is the maximum number of colors in
an edge-coloring of the complete graph $K_n$ not having $t$ edge-disjoint rainbow spanning trees.\\

\noindent{\bf Lower Bound:} Jahanbekam and West (See Lemma 5.1 in \cite{West}) showed the following lower bound for $r(n,t)$. 
\begin{proposition}{\cite{West}}\label{lower-bound}
For positive integers $n$ and $t$ such that $t\leq 2n-3$, there is an edge-coloring of $K_n$ using $\binom{n-2}{2} + t$ colors that does not have $t$ edge-disjoint rainbow spanning trees. When $n=2t+1$, the construction improves to $\binom{n-1}{2}$ colors. When $n=2t$, it improves to $\binom{n}{2}-t$. 
\end{proposition}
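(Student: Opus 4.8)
The statement to prove is Proposition~\ref{lower-bound}: a lower-bound construction for $r(n,t)$. The plan is to exhibit, for each of the three regimes, an explicit edge-coloring of $K_n$ with the stated number of colors that has no $t$ edge-disjoint rainbow spanning trees, and then to verify this via Theorem~\ref{partition}'s contrapositive — equivalently, by producing a single partition $P$ of $V(K_n)$ witnessing $|c(\CR(P))| < t(|P|-1)$. Since $t$ edge-disjoint rainbow spanning trees need $t(|P|-1)$ colors among the crossing edges of any $P$, it suffices to find one bad partition. I would take the simplest witnessing partition: fix two special vertices $u,v$ and let $P$ consist of the singleton $\{u\}$, the singleton $\{v\}$, and one giant part containing the remaining $n-2$ vertices, so $|P|=3$ and we need the crossing edges to use at most $3t-1$ colors; more flexibly, one can instead isolate $k$ vertices and bundle the rest, giving $|P|=k+1$ and the requirement that the crossing edges carry at most $t\cdot k - 1$ colors.

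For the generic regime $n\ge 2t+2$ with target $\binom{n-2}{2}+t$ colors: color the $\binom{n-2}{2}$ edges inside $K_{n-2}$ on $V(K_n)\setminus\{u,v\}$ all with distinct colors, and color the $2(n-2)+1 = 2n-3$ edges incident to $u$ or $v$ (including $uv$) using only $t$ \emph{new} colors, distributed as evenly as needed; this uses $\binom{n-2}{2}+t$ colors total. Now take $P$ with parts $\{u\},\{v\}$, and $V\setminus\{u,v\}$: the crossing edges are exactly the $2n-3$ edges at $u$ or $v$, which use only $t$ colors, and we need at most $3t-1$, which holds for $t\ge 1$. Hence no $t$ edge-disjoint rainbow spanning trees. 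The condition $t\le 2n-3$ ensures the $t$ new colors can actually appear (we have $2n-3$ edges to color with $t$ colors). For $n=2t+1$ with target $\binom{n-1}{2}$: isolate a single vertex $u$, color all $\binom{n-1}{2}$ edges inside $K_{n-1}$ distinctly, and color the $n-1 = 2t$ edges at $u$ reusing colors already present in $K_{n-1}$ (possible since $n-1\ge t$, indeed $2t$ edges can reuse $t$ of the existing colors). Then with $P = \{\{u\}, V\setminus\{u\}\}$ we have $|P|=2$, the crossing edges are the $2t$ edges at $u$, and they carry at most... here one must be slightly more careful: reusing old colors doesn't add to the global count but the crossing set still needs fewer than $t$ colors; instead color the $2t$ edges at $u$ with at most $t-1$ distinct colors (reused or not), e.g. pair them up into $t-1$ colors with one color class of size $4$ if $t\ge 2$, using colors from inside. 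For $t=1$ ($n=3$) this is trivial. For $n=2t$ with target $\binom{n}{2}-t$: this is the construction where $K_n$ is nearly rainbow but $t$ color classes are doubled; take $P$ to be the whole-vertex-set-minus-one-vertex-plus-singleton partition, or more naturally observe $K_{2t}$ has exactly $\binom{2t}{2} = t(2t-1)$ edges and $t$ spanning trees need $t(2t-1)$ edges, i.e.\ a perfect decomposition, so merging any two color classes (equivalently, removing $t$ colors by pairing edges) destroys the rainbow property of every decomposition; the clean witness is again a partition argument with $|P|=2$.

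I would organize the write-up as three short paragraphs, one per regime, each stating the coloring, naming the witnessing partition $P$, counting $|c(\CR(P))|$ and $t(|P|-1)$, and invoking the easy direction of Theorem~\ref{partition} (or just the trivial observation that $t$ edge-disjoint spanning trees cross any $P$ in $\ge t(|P|-1)$ edges, all distinctly colored if rainbow). The main obstacle, and the only place real care is needed, is the boundary regimes $n=2t+1$ and $n=2t$: there the edge counts are tight ($K_{2t}$ has exactly enough edges for $t$ spanning trees, $K_{2t+1}$ has just $t$ edges of slack), so the coloring must be chosen so that the witnessing partition's crossing edges genuinely fall one color short of $t(|P|-1)$, which forces the reuse of existing colors on the "boundary" edges rather than introducing fresh ones; I would verify the arithmetic $\binom{n}{2} - \binom{n-k}{2}$ versus $tk-1$ explicitly in those cases and choose $k$ (number of isolated vertices, $k=1$ or $k=2$) to make it work. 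Since this is attributed to Jahanbekam and West, I expect the actual proof in the cited reference handles exactly these edge cases, and my reconstruction would need to match their color-distribution choices; the generic case $n\ge 2t+2$ is routine.
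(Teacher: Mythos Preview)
The paper does not prove this proposition; it quotes it as Lemma~5.1 of Jahanbekam--West \cite{West} and explicitly skips the lower-bound argument. So there is no in-paper proof to compare against, but your reconstruction has a genuine gap worth flagging.

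Your verification rests on the claim that ``$t$ edge-disjoint rainbow spanning trees need $t(|P|-1)$ colors among the crossing edges of any $P$'', i.e.\ the easy direction of Theorem~\ref{partition}. But Theorem~\ref{partition} is about \emph{color}-disjoint trees, whereas $r(n,t)$ concerns \emph{edge}-disjoint ones. Two edge-disjoint rainbow spanning trees may repeat colors between them, so $t$ such trees can cross a partition $P$ in $t(|P|-1)$ edges while using as few as $|P|-1$ colors. Consequently your generic construction---spread $t$ new colors ``as evenly as needed'' over the $2n-3$ edges at $u,v$---does \emph{not} block $t$ edge-disjoint rainbow spanning trees. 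Concretely, for $t=2$, $n=6$: color the nine edges at $u,v$ with two colors so that every vertex of $V\setminus\{u,v\}$ sees both; then attach $u$ and $v$ as leaves (one edge of each color) to each of two edge-disjoint spanning trees of $K_4$, obtaining two edge-disjoint rainbow spanning trees of $K_6$.

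The construction that works is more lopsided: give $t-1$ of the $2n-3$ edges at $u,v$ their own unique colors and give all remaining $2n-2-t$ of them a single common color $c$ (this is where the hypothesis $t\le 2n-3$ enters). Any rainbow spanning tree has at least $|P|-1=2$ crossing edges of distinct colors, hence at most one $c$-edge, hence at least one of the $t-1$ uniquely colored edges. Since edge-disjoint trees cannot share an edge, $t$ of them would require $t$ of those $t-1$ edges---a pigeonhole contradiction. The cases $n=2t+1$ and $n=2t$ need analogous edge-counting/pigeonhole arguments rather than the color-count you propose; your sketches there inherit the same confusion between edge-disjoint and color-disjoint and would need to be redone along these lines.
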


This matches the upper bounds in Theorem \ref{anti-Ramsey}. Hence we will skip the proof of lower bounds in the subsequent theorems. Moreover, we only consider the case $t\geq 2$ since the case $t=1$ was already resolved in Bialostocki and Voxman \cite{BV}. In Section \ref{sec:tech}, we prove a technical lemma that will be used in the proof of Theorem \ref{anti-Ramsey}. In Section \ref{sec:2t+2}, \ref{sec:2t+3},\ref{sec:2t+1}, we show Theorem \ref{anti-Ramsey} when $n$ is in different range of values with respect to $t$.

\subsection{Technical lemma}\label{sec:tech}

\begin{lemma}\label{lem:1}
 Let $G$ be an edge-colored graph with $s$ colors $c_1, \cdots, c_s$ and $|V(G)| =n = 2t+2$ where $t\geq 3$. For color $c_i$, let $m_i$ be the number of edges of color $c_i$. Suppose $\dss_{i=1}^s (m_i-1) = 3t$ and $m_i \geq 2$ for all $i \in [s]$. Then we can construct $t$ edge-disjoint rainbow forests $F_1,\ldots, F_t$ in $G$ such that if we define $G_0 = G  -\bigcup\limits_{i=1}^t E(F_i)$, then 
  \begin{equation} \label{eq:5}
  |E(G_0)| \leq 2t+1.
  \end{equation} and 
 \begin{equation} \label{eq:6}
 \Delta(G_0) \leq t+1.
  \end{equation}
 
\end{lemma}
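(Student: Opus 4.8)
The plan is to remove, for each colour, all but at most $t$ of its edges and distribute the removed edges, one copy per colour per forest, among the $t$ forests; the whole lemma then reduces to a counting estimate plus two "can we do it" statements (acyclicity of the forests, and a good choice of which leftover edges to keep). Set $L=\{i: m_i\ge t+1\}$ for the set of \emph{heavy} colours. For a \emph{light} colour ($m_i\le t$) we will place \emph{all} $m_i$ of its edges into the forests (one edge into each of $m_i$ distinct forests), so light colours contribute nothing to $G_0$. For a heavy colour $c_i$ we will place $t$ of its $m_i$ edges into the forests (one into each forest) and leave the other $m_i-t$ edges in $G_0$, choosing \emph{which} $m_i-t$ edges to keep so as to control degrees.

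\emph{Counting.} Since $G$ is simple, $\Delta(G)\le n-1=2t+1$, and $\|G\|=3t+s$ with $s\le 3t$ because $\sum_i(m_i-1)=3t$ and each $m_i\ge 2$. If the distribution succeeds, $G_0$ is exactly the set of leftover edges of heavy colours, so $\|G_0\|=\sum_{i\in L}(m_i-t)=\sum_{i\in L}(m_i-1)-(t-1)|L|\le 3t-(t-1)|L|$. Each $i\in L$ contributes $m_i-1\ge t$ to the sum $\sum_i(m_i-1)=3t$, so $|L|\le 3$, whence $\|G_0\|\le 2t+1$ (with equality only in the degenerate case $|L|=1$, $s=1$, $m_1=3t+1$). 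This gives \eqref{eq:5}, and incidentally $\sum_j\|F_j\|=\|G\|-\|G_0\|\ge t+s-1$.

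\emph{Acyclicity.} I would process the colours in nonincreasing order of multiplicity, always adding the next chosen edge to a currently smallest eligible forest, so the partially built forests stay sparse and roughly balanced. When processing $c_i$ one must route $\min(m_i,t)$ of its $m_i$ edges into that many distinct forests with no forest acquiring a cycle; since each partially built forest has only $O(1)$ edges, its components are small and block only boundedly many of the $m_i$ candidate edges, so a Hall/König matching argument on the bipartite "edge of $c_i$ -- forest" incidence produces the routing (for light colours all $m_i\le t$ edges must be routed, but then $m_i$ is small and the same argument is even easier).

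\emph{Degree bound and the main obstacle.} After the routing, $G_0=\bigcup_{i\in L}S_i$ with $S_i\subseteq G_i$ the colour-$c_i$ subgraph and $|S_i|=m_i-t$. If $|L|=3$ then $\|G_0\|\le 3\le t+1$ and \eqref{eq:6} is automatic; if $|L|\le 2$ then $\|G_0\|\le t+2$, so a vertex of $G_0$ has degree $\le t+2$ and \eqref{eq:6} fails only if $G_0$ is a single star $K_{1,t+2}$, which one rules out because not both $G_1,G_2$ can be stars at the same vertex (their union has $m_1+m_2>2t+1$ edges), so the kept edges can be chosen to avoid any one centre; the case $|L|=1$ is handled by greedily routing, among the $t$ edges of $c_1$ sent to forests, those incident to the currently heaviest vertex, using that $G_1$ has at most two vertices of degree $\ge t+2$ and (via $\deg_{G_1}(u)+\deg_{G_1}(v)\le m_1+1$ and $m_1\le 3t+1$) their total excess over $t+1$ is at most $t$. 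The delicate point — the place where $t\ge3$ and $n=2t+2$ are genuinely used — is that these degree-relieving choices must be made \emph{while} keeping every forest acyclic: the two demands compete for the same edges, so the construction has to interleave the matching argument of the previous paragraph with the "relieve the heaviest vertex" rule and verify by a local exchange argument that they never block each other.
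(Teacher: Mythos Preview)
Your counting is clean and correct: with $L=\{i:m_i\ge t+1\}$ you get $|L|\le 3$ and, \emph{provided the routing succeeds}, $\|G_0\|=\sum_{i\in L}(m_i-t)\le 3t-(t-1)|L|\le 2t+1$. The gap is the routing. Your justification for acyclicity is a per-colour Hall argument on the bipartite graph (edges of $c_i$) versus (forests), relying on ``each partially built forest has only $O(1)$ edges, its components are small and block only boundedly many of the $m_i$ candidate edges''. But blocking \emph{few} edges is not enough: if several forests block the \emph{same} few edges, Hall fails. Take $t=3$, $n=8$ and light colours $c_1=\{13,14\}$, $c_2=\{23,24\}$, $c_3=\{57,58\}$, $c_4=\{67,68\}$, $c_5=\{12,56\}$ (pad with four further $2$-edge colours to reach $\sum(m_i-1)=9$). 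The intermediate state $F_1=\emptyset$, $F_2=\{13,23,57,67\}$, $F_3=\{14,24,58,68\}$ is a legitimate family of edge-disjoint rainbow forests with $O(1)$ edges each, yet both edges of $c_5$ are blocked in $F_2$ and in $F_3$, so only $F_1$ is eligible and $c_5$ cannot be saturated. Hence either you must prove that \emph{your particular} greedy never reaches such a state, or you need a genuinely global argument over all colours at once; you give neither. You yourself flag the interaction of acyclicity with the degree-relieving choices for heavy colours as ``the delicate point'' and defer it to an unspecified ``local exchange argument''; that is precisely where the proof is missing. (Also, the aside that light colours are ``even easier'' is backwards: saturating \emph{all} $m_i\le t$ edges of a light colour is a stronger Hall requirement than choosing \emph{some} $t$ of the $m_i\ge t$ edges of a heavy one.)

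The paper avoids this trap by never demanding that all repeated edges be absorbed into the forests. It splits on $m_1\ge 2t+2$ (then $s\le t$, and two greedy rounds---first in colour $c_1$, then in the remaining colours---each time deleting an edge at a current maximum-degree vertex, give both \eqref{eq:5} and \eqref{eq:6} directly) versus $m_1\le 2t+1$. In the second case it builds $F_1,\dots,F_t$ one at a time: at step $j$ it applies Hall's theorem to the bipartite graph \emph{vertices of current degree $2t+2-j$} versus \emph{colours incident to them}, obtaining a rainbow edge set covering every top-degree vertex, and then keeps a maximal acyclic subset of it; this forces $\Delta(G_0)\le t+1$ after $t$ steps. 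Finally it passes to an edge-maximal such family and bounds $|E(G_0)|$ by a short case analysis on $\min_j |E(F_j)|\in\{1,2,3\}$. The decisive structural difference is that the paper's Hall argument is \emph{one forest at a time}, on vertices versus colours, with no coupling between forests; yours is edges versus forests with all $t$ forests entangled through shared acyclicity constraints, which is exactly why a per-colour argument cannot close it.
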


\begin{proof}
  We consider two cases:
  \begin{description}
  \item Case 1: $m_1\geq 2t+2$. 
    Note that
    $$\sum_{i=2}^s(m_i-1)=3t-(m_1-1)\leq t-1.$$
    Thus, $s\leq t$.  Let $d_i(v)$ be the number of edges
    in color $c_i$ and incident to $v$ in the current graph $G$.  We
    construct the edge-disjoint rainbow forests $F_1,F_2,\ldots, F_t$
    in two rounds: In the first round, we greedily extract edges only in
    color $c_1$. For $i=1,\ldots, t$, at step $i$, pick a vertex $v$ with
    maximum $d_1(v)$ (pick arbitrarily if tie).
    Pick an edge in color $c_1$ incident to $v$,
    assign it to $F_i$, and delete it from $G$.
   
    We claim that after
    the first round $d_1(v)\leq t+1$ for any vertex $v$.

    Suppose not,
    if $d_1(v)\geq t+2$. Since $n-1-(t+2) < t$, it follows that there exists another vertex $u$ with $d_1(u)\geq d_1(v)-1\geq t+1$.
    
    This implies
    $$m_1\geq t+d_1(v)+d_1(u)-1\geq 3t+2.$$
    However, $$m_1-1\leq \sum_{i=1}^s(m_i-1)=3t.$$
    which gives us the contradiction. 

    In the second round, we greedily extract edges not in color $c_1$.
    For $i=1,\ldots, t$, at step $i$, among all vertices $v$ with at least
    one neighboring edge not in color $c_1$,
    pick a vertex $v$ with
    maximum vertex degree $d(v)$ (pick arbitrarily if tie).
    Pick an edge  incident to $v$ and not in color $c_1$,
    assign it to $F_i$, and delete it from $G$. 
        
    If we succeed with selecting $t$ edges not in color $c_1$ in the second round,
    we claim $d(v)\leq t+1$ for any vertex $v$. 
    Suppose not,
    if $d(v)\geq t+2$. Then there is another vertex $u$ with $d(u)\geq d(v)-1\geq t+1$.
    It implies
    $$\sum_{i=1}^sm_i\geq 2t+d(u)+d(v)-1\geq 4t+2.$$
    However, since $s\leq t$, we have
    $$\sum_{i=1}^s m_i\leq 3t+s\leq 4t.$$
    Contradiction. Therefore it follows that $d(v) \leq t+1$. Moreover, $|E(G_0)| \leq 4t - 2t \leq 2t$.

    If the process stops at step $i=l<t$, then all remaining edges in $G_0$ must be in color 1.
    Thus, by the previous claim, $\Delta(G_0) \leq t+1$. Moreover,
    $$|E(G_0)| \leq m_1 -t \leq (3t+1) -t = 2t+1.$$

	In both cases above, $F_1, \cdots F_t$ are edge-disjoint rainbow forests that satisfies inequality (\ref{eq:5}) and (\ref{eq:6}).

  \item Case 2: $m_1\leq 2t+1$. 
  
  {\bf Claim:} There exists $t$ edge-disjoint rainbow forests $F_1, F_2, \cdots, F_t$ such that $\Delta(G_0) \leq t+1$. 
  
  For $j=1,2,\ldots, t$, we will construct a rainbow forest $F_j$ by selecting a rainbow set of edges such that after deleting these edges from $G$, $\Delta(G_0) \leq 2t+1-j$. Notice that when $j=t$, we will have $\Delta(G_0) \leq t+1$. Our procedure is as follows:
  
  For step $j$, without loss of generality, let $v_1, v_2, \cdots, v_l$ be the vertices with degree $2t+2 -j$ and let $c_1, c_2, \cdots, c_m$ be the set of colors of edges incident to $v_1, v_2, \cdots, v_l$ in $G$. If there is no such vertex, simply pick an edge incident to the max-degree vertex and assign it to $F_j$. Otherwise, we will construct an auxiliary bipartite graph $H = A\cup B$ where $A = \{v_1, \cdots, v_l\}$ and B = $\{c_1, c_2, \dots, c_m\}$ and $v_x c_y \in E(H)$ if and only if there is an edge of color $c_y$ incident to $v_x$. We claim that there exists a matching of $A$ in $H$. Suppose not, then by Hall's theorem, there exists a set of vertices $A' = \{u_1, u_2, \cdots u_k\} \subseteq A$ such that $|N(A')|< |A'|=k$ where $k\geq 2$. Without loss of generality, suppose $N(A) = \{c_1', c_2', \cdots, c_q'\}$ where $q\leq k-1$. Let $m_i'$ be the number of edges of color $c_i'$ remaining in $G$.
  
  Note that $k\neq 2$ since otherwise we will have one color with at least $2\cdot (2t+2-j) -1 \geq 2t+3$ edges, which contradicts our assumption in this case.
  
  Notice that for every $i\in [k]$, $u_i$ has at least $(2t+2-j)$ edges incident to it. Moreover, at least $j-1$ edges are already deleted from $G$ in previous steps. Therefore, we have 
  \begin{align*}
	  \frac{k(2t+2-j)}{2} & \leq \dss_{i=1}^{q} m_i' \leq \lp \dss_{i=1}^{q} (m_i'-1)\rp + (k-1) \leq 3t - (j-1) + (k-1).
  \end{align*}
It follows that $$k \leq 2+ \frac{2t}{2t-j} \leq 4.$$
Similarly, using another way of counting the edges incident to some $u_i$ $(i\in [k]$), we have 
$$ k(2t+2-j) - \binom{k}{2} \leq 3t-(j-1)+(k-1).$$
which implies that 
$$t(2k-3) \leq \frac{k(k-3)}{2} + j(k-1) \leq \frac{k(k-3)}{2} + t(k-1).$$

 It follows that $t\leq \frac{k(k-3)}{2(k-2)}.$ Since $k \leq 4$ and $k> 2$, we obtain that $t\leq 1$, which contradicts our assumption that $t\geq 2$. Thus by contradiction, there exists a matching of $A$ in $H$. This implies that there exists a rainbow set of edges $E_j$ that cover all vertices with degree $2t+2 -j$ in step $j$. We can then find a maximally acyclic subset $F_j$ of $E_j$ such that $F_j$ is a rainbow forest and every vertex of degree $2t+2-j$ is adjacent to some edge in $F_j$. Delete edges of $F_j$ from $G$ and we have $\Delta(G_0) \leq 2t+1 -j$. As a result, after $t$ steps, we obtain $t$ edge-disjoint rainbow forests $F_1, \cdots, F_t$ and $\Delta(G_0) \leq t+1$. This finishes the proof of the claim.\\

Now let $\{F_1, F_2, \cdots, F_t\}$ be an edge-maximal set of $t$ edge-disjoint rainbow forests that satisfies $\Delta(G_0) \leq t+1$. We claim that $|E(G_0)| \leq 2t+1$. Suppose not, i.e., $|E(G_0)| \geq 2t+2$. It follows that $\dss_{i=1}^t |E(F_i)| \leq 6t-(2t+2) < 4t$, i.e. there exists a $j \in [t]$ such that $F_j$ has at most $3$ edges. Since $F_j$ is edge maximal, none of the edges in $G_0 $ can be added to $F_j$.  We have three cases:

 \begin{description}
 	\item Case 2a: $|E(F_j)| = 1$. It then follows that all edges in $G_0$ have the same color (call it $c_1'$) as the single edge in $F_j$. Thus we have a color with multiplicity at least $2t+3$, which contradicts that $m_1 < 2t+2$.

 	\item Case 2b: $|E(F_j)| = 2$. Similarly, we have that at least $2t+1$ edges in $G_0$ that share the same colors (call them $c_1', c_2'$) as edges in $F_j$. It follows that $m_1 + m_2 \geq 2t+3$. Similar to Case 1, in this case, we have that $s \leq t+1$ and $|E(G)| = 3t+s \leq 4t+1$. Since $|E(G_0)| \geq 2t+2$, that implies that $\dss_{i=1}^t |E(F_i)| \leq (4t+1)-(2t+2) = 2t-1.$ Hence there exists some $F_k$ such that $|E(F_k)| \leq 1$ and we are done by Case 2a.

 	\item Case 2c: $|E(F_j)| = 3$. Similarly, we have that at least $2t-1$ edges in $G_0$ share the same colors (call them $c_1', c_2', c_3')$ as edges in $F_j$. It follows that  $m_1 + m_2 + m_3 \geq 2t+2$. By inequality \eqref{edge-bound}, we have that $s \leq t+4$ and $|E(G)| \leq 4t+4$. Since $|E(G_0)| \geq 2t+2$, that implies that $\dss_{i=1}^t |E(F_i)| \leq 2t+2$. Since $t\geq 3$ by our assumption, there exists a $k \in [t]$ such that $|E(F_k)| \leq 2$ and we are done by Case $2b$ and Case $2c$.
 	
% 	The only case remaining is when $t=2$ and every forest $F_i$ has at least $3$ edges. Then by the chain of equalities, we have $|E(F_1)| = |E(F_2)| = 3$, $|E(G_0)| = 6$, $s = 6$ and $m_i = 2$ for all $1\leq i\leq 6$. Again, since $F_1, F_2$ are edge maximal, it follows that every edge $e$ in $G_0$ either has the same color as edges in $F_1$ and $F_2$, or adding $e$ into $F_1, F_2$ creates a cycle. The only case when this can happen is when $F_1 \cup F_2 \cup G_0$ forms two edge-disjoint $K_4$. This contradicts that $n = 2t+2 = 6$.
 		
 \end{description}

 Therefore, by contradiction, we have that $|E(G_0)| \leq 2t+1$ and we are done.

  \end{description}
  
\end{proof}

%%%%%%%%%%%%%%%%%%%%%%%%%%%%%%%%%%%%%%%%%%%%%%%%%%%%%
% n = 2t+2 case
%%%%%%%%%%%%%%%%%%%%%%%%%%%%%%%%%%%%%%%%%%%%%%%%%%%%%

\subsection{Proof of Theorem \ref{anti-Ramsey} where $n =2t+2$}\label{sec:2t+2}

\begin{proposition}\label{base-case}
	For any $n = 2t+2 \geq 6$, we have $r(n,t) = \binom{n-2}{2} +t = 2t^2$.
\end{proposition}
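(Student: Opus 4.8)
The plan is the following. By Proposition~\ref{lower-bound} we have $r(2t+2,t)\ge \binom{n-2}{2}+t=2t^2$, so it remains to prove the matching upper bound: every edge-coloring of $K_n$ with $n=2t+2$ using at least $2t^2+1$ colors contains $t$ edge-disjoint rainbow spanning trees. First I would reduce to colorings using \emph{exactly} $N:=2t^2+1$ colors. Indeed, merging two color classes of a coloring $c$ into one produces a coloring $c'$ with one fewer color such that every $c'$-rainbow tree is also $c$-rainbow; hence if $c$ has no $t$ edge-disjoint rainbow spanning trees then neither does $c'$, and so it suffices to prove the statement for colorings with exactly $N$ colors. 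Since $\|K_n\|=\binom{2t+2}{2}=2t^2+3t+1$, such a coloring satisfies $\sum_i(m_i-1)=\|K_n\|-N=3t$, where $m_i$ is the multiplicity of the $i$-th color.

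Next I would isolate the heavy colors. Let $H$ be the spanning subgraph of $K_n$ (on the full vertex set $V(K_n)$) consisting of all edges whose color has multiplicity at least $2$; since $H$ contains every edge of each such color, the colors of $H$ all have multiplicity $\ge2$ and they still satisfy $\sum(m_i-1)=3t$. Applying Lemma~\ref{lem:1} to $H$ yields $t$ edge-disjoint rainbow forests $F_1,\dots,F_t$ in $H$ with $H_0:=H-\bigcup_iE(F_i)$ satisfying $\|H_0\|\le 2t+1$ and $\Delta(H_0)\le t+1$. I would then extend $F_1,\dots,F_t$ to $t$ edge-disjoint rainbow spanning trees using Theorem~\ref{extension}. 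In that theorem the graph $G'$ is $K_n$ with every edge whose color appears in some $F_i$ deleted; as all those colors are heavy, $K_n\setminus G'\subseteq H$, so the light-colored subgraph $K_n-H$ --- which is a rainbow subgraph --- is contained in $G'$. A color-disjoint extension is exactly a family of $t$ edge-disjoint rainbow spanning trees, so the whole proof comes down to verifying inequality~\eqref{eq:ext} for every partition $P$ of $V(K_n)$ with $|P|=k\ge2$.

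The crux is the single estimate
\[
|c(\CR(P,G'))|+\sum_{i=1}^t|\CR(P,F_i)|\ \ge\ |\CR(P,K_n)|-|\CR(P,H_0)|.
\]
To see it, note that $\CR(P,K_n-H)$ is a rainbow edge set contained in $\CR(P,G')$, so $|c(\CR(P,G'))|\ge|c(\CR(P,K_n-H))|=|\CR(P,K_n)|-|\CR(P,H)|$; on the other hand $\sum_i|\CR(P,F_i)|=|\CR(P,\bigcup_iF_i)|$ because the $F_i$ are edge-disjoint, and $|\CR(P,H)|=|\CR(P,\bigcup_iF_i)|+|\CR(P,H_0)|$ because $F_1,\dots,F_t$ and $H_0$ edge-partition $H$. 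It then remains to show $|\CR(P,K_n)|-|\CR(P,H_0)|\ge t(k-1)$. For $k\ge3$ I would use $|\CR(P,H_0)|\le\|H_0\|\le 2t+1$ together with $|\CR(P,K_n)|=\binom{2t+2}{2}-\sum_j\binom{a_j}{2}$, where $a_1,\dots,a_k$ are the part sizes; this reduces the claim to $\sum_j\binom{a_j}{2}\le 2t^2+2t-tk$, and by convexity of $\binom{\cdot}{2}$ the left side is maximized at the partition with sizes $(2t+3-k,1,\dots,1)$, for which the required inequality is $\binom{2t+3-k}{2}\le t(2t+2-k)$, valid for all $3\le k\le 2t+2$. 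For $k=2$ this crude bound is insufficient exactly when one part is a single vertex $v$; there I would instead bound $|\CR(P,H_0)|=\deg_{H_0}(v)\le\Delta(H_0)\le t+1$ against $|\CR(P,K_n)|=2t+1$ to obtain the required $\ge t$, while for every other $2$-partition $|\CR(P,K_n)|\ge 4t$ and $|\CR(P,H_0)|\le 2t+1$ give $\ge 2t-1\ge t$.

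The step I expect to be the main obstacle is precisely this verification of \eqref{eq:ext}: both quantitative outputs of Lemma~\ref{lem:1} are genuinely needed and act in different regimes --- the edge bound $\|H_0\|\le 2t+1$ controls the coarse partitions, while the degree bound $\Delta(H_0)\le t+1$ is what saves the singleton $2$-partition --- and the inequality is tight for several partitions (notably $k=3$ and $k=2t+2$), leaving no slack. Once \eqref{eq:ext} is established, Theorem~\ref{extension} delivers the $t$ edge-disjoint rainbow spanning trees, giving $r(2t+2,t)\le 2t^2$, which with Proposition~\ref{lower-bound} yields $r(2t+2,t)=\binom{n-2}{2}+t=2t^2$.
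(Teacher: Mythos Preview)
Your proposal is correct and follows essentially the same approach as the paper: apply Lemma~\ref{lem:1} to the heavy-color subgraph, then verify the partition inequality of Theorem~\ref{extension} using the edge bound $\|H_0\|\le 2t+1$ for $|P|\ge 3$ and the degree bound $\Delta(H_0)\le t+1$ for the singleton bipartition. Your derivation of the key lower bound $|c(\CR(P,G'))|+\sum_i|\CR(P,F_i)|\ge |\CR(P,K_n)|-|\CR(P,H_0)|$ is slightly more explicit than the paper's, and your reduction to exactly $2t^2+1$ colors spells out a step the paper leaves implicit, but the arguments coincide.
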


\begin{proof}
Note that the lower bound is shown by Jahanbekam and West in Proposition \ref{lower-bound}. For the upper bound, we will assume that $t\geq 3$ since the case when $t = 2$ is implied by the result of Akbari and Alipour \cite{AA}. We will show that any coloring of $K_{2t+2}$ with $2t^2+1$ distinct colors contains $t$ edge-disjoint rainbow spanning trees. Call this edge-colored graph $G$. Let $m_i$ be the multiplicity of the color $c_i$ in $G$. Without loss of generality, say the first $s$ colors have multiplicity at least $2$, i.e.
\[m_1 \geq m_2 \geq \cdots \geq m_s \geq 2.\]

Let $G_1$ be the spanning subgraph of $G$ consisting of all edges with color multiplicity greater than 1 in $G$. Let $G_2$ be the spanning subgraph consisting of the remaining edges.
  We have
  \begin{equation}\label{edge-bound}
  \sum_{i=1}^s (m_i-1)={n\choose 2}-(2t^2 + 1)=3t.
\end{equation}

  In particular, we have
  $$|E(G_1)|=\sum_{i=1}^s m_i=3t+s\leq 6t.$$
 
By Lemma \ref{lem:1}, it follows that we can construct $t$ edge-disjoint rainbow spanning forests $F_1,\ldots, F_t$ in $G$ such that if we define $G_0 = E(G_1)  -\bigcup\limits_{i=1}^t E(F_i)$, then 
  $$ |E(G_0)| \leq 2t+1.$$ and $$\Delta(G_0) \leq t+1.$$

Now we show that  $F_1,\ldots, F_t$ have a color-disjoint extension to $t$ edge-disjoint rainbow spanning trees.
Consider any partition $P$. We will verify
\begin{equation} \label{eq:extend}
|c(\CR(P),G_2)|+ \sum_{i=1}^t|\CR(P, F_i)|\geq t(|P|-1).
\end{equation}

We will first verify the case when $3\leq |P| \leq n$. Note that
$$|c(\CR(P),G_2)|+ \sum_{i=1}^t|\CR(P, F_i)| -t(|P|-1) \geq {n\choose 2}-(2t+1) -{n-|P|+1 \choose 2}-t(|P|-1).$$

We want to show that the right hand side of the above inequality is nonnegative.
  Note that the function on the right hand side is concave downward with respect to $|P|$. 
  Thus it is sufficient to verify it at $|P|=3$ and $|P|=n$.

When $|P|=3$, we have
  $${n\choose 2}-(2t+1) -{n-2\choose 2}-2t = 0.$$

When $|P|=n$, we have
$${n\choose 2}-(2t+1)-t(n-1)=0.$$

It remains to verify the inequality (\ref{eq:extend}) for $|P| = 2$.
By Theorem \ref{extension}, we have $|E(G_0)|\leq 2t+1$.
If each part of $P$ contains at least $2$ vertices, then we have
\begin{align*}
  &\hspace*{-1cm} |c(\CR(P),G_2)|+ \sum_{i=1}^t|\CR(P, F_i)| -t(|P|-1)\\
  &\geq {n\choose 2}-|E(G_0)|-\lp{n-2 \choose 2}+1 \rp-t\\
  &\geq {n\choose 2}-(2t+1) -\lp{n-2 \choose 2}+1 \rp-t\\
  &=t-1\geq 0.
\end{align*}
Otherwise, $P$ is of the  form $V(G) = \{v\} \cup B$ for some $v\in V(G)$ and $B = V(G)\backslash \{v\}$.
By Lemma \ref{lem:1}, we have $d_{G_0}\leq t+1$. Thus,
$$|c(\CR(P),G_2)|+ \sum_{i=1}^t|\CR(P, F_i)| - t(|P|-1)\geq (n-1)-d_{G_0}(v)-t
\geq 2t+1-(t+1)-t= 0.$$
Therefore, by Theorem \ref{extension}, $F_1,\ldots, F_t$ have a color-disjoint extension to $t$ edge-disjoint rainbow spanning trees.
\end{proof}

%%%%%%%%%%%%%%%%%%%%%%%%%%%%%%%%%%%%%%%%%%%%%%%%%%%%%
% n >= 2t+3 case
%%%%%%%%%%%%%%%%%%%%%%%%%%%%%%%%%%%%%%%%%%%%%%%%%%%%%

\subsection{Proof of Theorem \ref{anti-Ramsey} where $n\geq 2t+3$}\label{sec:2t+3}

\begin{proposition}
	For any $n\geq 2t+2 \geq 6$, we have  $r(n,t)={n-2\choose 2}+t$.
\end{proposition}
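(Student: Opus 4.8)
The plan is to induct on $n$, with the base case $n = 2t+2$ already established in Proposition \ref{base-case}. So assume $n \geq 2t+3$, let $G$ be an edge-coloring of $K_n$ with $\binom{n-2}{2} + t + 1$ colors, and suppose for contradiction that $G$ has no $t$ edge-disjoint rainbow spanning trees. I want to find a vertex $v$ whose deletion leaves a copy of $K_{n-1}$ that still uses at least $\binom{n-3}{2} + t + 1$ colors; then the induction hypothesis gives $t$ edge-disjoint rainbow spanning trees in $K_{n-1} = G - v$, and the final task is to re-attach $v$ to each of the $t$ trees along $t$ edges at $v$ carrying $t$ distinct colors, none of which already appears in the tree it is added to. To make the first part work, I would call a color \emph{rare} if it has multiplicity $1$ in $G$; the number of non-rare colors is at most $\binom{n-2}{2}+t+1 - (\text{number of rare colors})$, and a counting argument (the number of edges is $\binom{n}{2}$) shows that the rare colors are abundant. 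I would look for a vertex $v$ such that the number of colors appearing \emph{only} on edges incident to $v$ is small — say at most $n-2$ — so that deleting $v$ destroys at most $n-2$ colors, leaving $\geq \binom{n-2}{2}+t+1 - (n-2) = \binom{n-3}{2}+t+1$ colors in $G-v$. An averaging/pigeonhole argument over all $n$ choices of $v$ should produce such a vertex, since each color of multiplicity $1$ is "private" to at most two vertices and so can be charged to destroy at most... (this needs care; see the obstacle below).

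The re-attachment step should be handled via Theorem \ref{extension} rather than by hand: view the $t$ trees $T_1, \dots, T_t$ obtained in $G - v$ as $t$ edge-disjoint rainbow spanning forests of $G$ (each missing only the vertex $v$), and check condition \eqref{eq:ext}. Any partition $P$ of $V(G)$ either already separates two vertices of $G - v$ — in which case the trees $T_i$ themselves contribute enough crossing edges — or is of the form $\{v\} \cup (V(G)\setminus\{v\})$, the only genuinely new case, where I need the $n-1$ edges at $v$ together with the colors available on $G'$ (edges whose colors do not appear in any $T_i$) to supply $t$ distinct "new" colors at $v$. Since $|E(T_1 \cup \dots \cup T_t)| = t(n-2)$ uses only $t(n-2)$ colors and $G$ has $\binom{n-2}{2}+t+1 \geq t(n-2)+1$ colors for $n \geq 2t+3$ (this inequality is where $n \geq 2t+3$, as opposed to smaller $n$, is used), there are colors on $v$'s star not used by the forests, and a short count bounds $|c(\CR(P, G'))| + \sum_i |\CR(P,F_i)|$ from below by $t$. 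The bookkeeping here mirrors the $|P| = 2$ analysis in the proof of Proposition \ref{base-case}.

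The main obstacle I anticipate is the vertex-selection step: ensuring that some vertex $v$ can be removed without losing too many colors. A color of multiplicity $1$ is private to the two endpoints of its unique edge, so naively each such color could be "destroyed" by removing either of two vertices, and summing over $v$ the count of colors destroyed by removing $v$ could be as large as $2 \cdot(\#\text{rare colors})$, giving an average near $\frac{2}{n}\binom{n-2}{2} \approx n-3$, which is just barely too weak. The fix is probably to be smarter: either choose $v$ to also have controlled degree in the subgraph $G_1$ of high-multiplicity edges, or to argue that if every vertex were "bad" (destroys $\geq n-1$ private colors) then nearly all of $K_n$'s edges have multiplicity $1$, forcing the total color count to be close to $\binom{n}{2}$, which contradicts it being only $\binom{n-2}{2}+t+1$ once $n$ is not too small — and the residual small cases ($n$ close to $2t+2$) are covered either by the base case or by a direct argument. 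I would expect to need a careful lemma isolating exactly which vertex to delete, analogous in spirit to Lemma \ref{lem:1}, and this is where most of the work will go.
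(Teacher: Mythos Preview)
Your outline matches the paper's Case~1, but it has a genuine gap in the re-attachment step, and it is missing the second case that the paper needs to finish the argument.

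\textbf{The gap in re-attachment.} Take the partition $P=\{\{v\},\,V(G)\setminus\{v\}\}$. Each $T_i$ lives in $G-v$, so $|\CR(P,F_i)|=0$, and condition~\eqref{eq:ext} reduces to $|c(\CR(P,G'))|\geq t$: you need at least $t$ distinct colors on the star at $v$ that are not used by any $T_i$. Your justification (``$G$ has more colors than the trees use, so some color at $v$ is free'') does not establish this: the $T_i$ are built inside $G-v$ with no control over which colors they pick, and they may well absorb every color that happens to appear on an edge at $v$. In particular, if $|D(v)|=0$ (no color is private to $v$), then every color at $v$ also occurs in $G-v$ and could be used by the trees; your vertex-selection criterion (``$|D(v)|$ small'') would happily choose such a $v$, and then Theorem~\ref{extension} fails at this very partition.

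\textbf{How the paper handles it.} The paper splits into two cases. In Case~1 it asks for a vertex with $|D(v)|\leq n-3$ \emph{and} $|\Gamma(v)|\geq t$; the second condition guarantees $t$ edges at $v$ whose colors do not occur in $G-v$ at all, so re-attachment is immediate (one private-colored edge per tree) and Theorem~\ref{extension} is not invoked. In Case~2, when no such vertex exists, a short counting argument produces two vertices $v_1,v_2$ with $|\Gamma(v_i)|\leq t-1$; the paper deletes both, strips a carefully chosen color set $S$ of size between $t+1$ and $n-2$, applies Theorem~\ref{partition} directly to $G-\{v_1,v_2\}$ (not the induction hypothesis), and then re-attaches $v_1,v_2$ via a bipartite matching between the $S$-colored edges at $v_1$ and at $v_2$. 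The inequality $n\geq 2t+3$ is used in this Case~2 partition check, not where you placed it. So your anticipated obstacle is real, but the resolution is not a sharper single-vertex lemma; it is a qualitatively different two-vertex deletion combined with Theorem~\ref{partition}.
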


\begin{proof}
	Again, the lower bound is due to Proposition \ref{lower-bound}. For the upper bound, we will show that every edge-coloring of $K_n$ with exactly ${n-2\choose 2} +t+1$
  distinct colors has $t$ edge-disjoint spanning trees. Call this edge-colored graph $G$.

Given a vertex $v$, we define $D(v)$ to be the set of colors $C$ such that every edge with colors in $C$ is incident to $v$. Given a vertex $v$ and a set of colors $C$, define $\Gamma(v, C)$ as the set of edges incident to $v$ with colors in $C$. For ease of notation, we let $\Gamma(v) = \Gamma(v,D(v))$.

For fixed $t$, we will prove the theorem by induction on $n$.
The base case is when $n = 2t+2$, which is proven in Proposition \ref{base-case}. Let's now consider the theorem when $n\geq 2t+3$.

\begin{description}
\item Case 1: there exists a vertex $v \in V(G)$ with $|\Gamma(v)| \geq t$ and $|D(v)| \leq n-3$.

  In this case, we set $G' = G-\{v\}$. Note that $G'$ is an
  edge-colored complete graph with at least
  ${n-2\choose 2} +t+1 - (n-3) = {n-3 \choose 2} + t+1$ distinct
  colors.  Moreover $|G'| \geq 2t+2$. Hence by induction, there exists
  $t$ edge-disjoint rainbow spanning trees in $G'$.  Note that by our
  definition of $D(v)$, none of the colors in $D(v)$ appear in
  $E(G')$. Moreover, since $|\Gamma(v)| \geq t$, we can extend the $t$
  edge-disjoint rainbow spanning trees in $G'$ to $G$ by adding one
  edge in $\Gamma(v)$ to each of the rainbow spanning trees in $G'$.

\item Case 2: Suppose we are not in Case 1. We first claim that
 there exists two vertices $v_1, v_2 \in V(G)$ such that $|\Gamma(v_1)| \leq t-1$ and $|\Gamma(v_2)| \leq t-1$.

  Otherwise, there are at least $n-1$ vertices $u$ with $|\Gamma(u)| \geq t$. Since we are not in Case $1$, it follows that all these vertices $u$ also satisfy $|D(u)| \geq n-2$.		
  Hence by counting the number of distinct colors in $G$, we have that 
  \[   \frac{(n-1)(n-2)}{2} \leq {n-2 \choose 2} + t + 1.\]
  which implies that $n \leq t+3$, giving us the contradiction.\\

 Now suppose $|\Gamma(v_1)| \leq t-1$ and $|\Gamma(v_2)| \leq t-1$. Let $D = D(v_1) \cup D(v_2)$. Add new colors to $D$ until $|\Gamma(v_1, D)| \geq t$, $|\Gamma(v_2, D)| \geq t+1$ and $|D| \geq t+1$. Call the resulting color set $S$. Note that 
					\[ t+1 \leq |S| \leq 2t+1 \leq n-2.\]
		
		Now let $G' = G-\{v_1, v_2\}$ and delete all edges of colors in $S$ from $G'$. 
		
		We claim that $G'$ has t color-disjoint rainbow spanning trees.

		By Theorem \ref{partition}, it is sufficient to verify the condition that for any partition $P$ of $V(G')$, 
     					 $$|c(\CR(P, G'))|\geq t(|P|-1).$$
 			 Observe    
			       \begin{align*}
			        & |c(\CR(P,G'))| -t(|P|-1) \hspace*{-2cm} \\
			        & \geq 
			        |c(E(G')|- {n-1-|P|\choose 2}-t(|P|-1)\\
			                   &\geq {n-2\choose 2}+t+1 -|S| - {n-1-|P|\choose 2}-t(|P|-1)\\
			                    &\geq {n-2\choose 2}+t+1 - (n-2) - {n-1-|P|\choose 2}-t(|P|-1).
			      \end{align*}
			      Note the expression above is concave downward as a function of $|P|$. It is sufficient to check the value
			      at $2$ and $n-2$.
			      When $|P|=2$, we have
			      \begin{align*}
			 |c(\CR(P,G'))| -t(|P|-1) \geq 
			{n-2\choose 2}+t+1 - (n-2) - {n-3\choose 2}-t= 0.       
			      \end{align*}
			
			When $|P|=n-2$, we have
			\begin{align*}
			 |c(\CR(P,G'))| -t(|P|-1)&\geq {n-2\choose 2}+t+1 - (n-2) -t(n-3)\\
			&= \frac{(n-4)(n-2t-3)}{2}\\
			&\geq 0.
			\end{align*}
			Here we use the assumption $n \geq 2t+3$ in the last step.
            Now it remains to extend the $t$ color-disjoint spanning trees we found to $G$ by using only the colors in $S$.
			Let $e_1, \cdots, e_k$ be the edges in $G$ incident to $v_1$ with colors in $S$. Let $e_1', \cdots e_l'$ be the edges in $G\backslash \{v_1\}$ incident to $v_2$ with colors in $S$. With our selection of $S$, it follows that $k, l\geq t$. Now construct  an auxiliary bipartite graph $H$ with partite sets $A = \{e_1, \cdots, e_k\}$ and $B = \{e_1', \cdots, e_l'\}$ such that $e_i e_j' \in E(H)$ if and only if $e_i, e_j'$ have different colors in $G$.  

			We claim that there is a matching of size $t$ in $H$.
			Let $M$ be the maximum matching in $H$. Without loss of generality, suppose $e_1 e_1', \cdots, e_m e_m' \in M$ where $m < t$. It follows that $\{e_j:  m < j \leq k\} \cup \{e_j':  m < j \leq l\}$ all have the same color (otherwise we can extend the matching). Without loss of generality, they all have color $x$. Now observe that for every matched edge $e_i e_i'$, exactly one of the two end vertices must be in color $x$. Otherwise, we can extend the matching by pairing $e_i$ with $e_t'$ and $e_t$ with $e_i'$. This implies that $H$ has at most $t$ colors, which contradicts that $|S| \geq t+1$.

			Hence there is a matching of size $t$ in $H$. Since none of the edges in $G'$ have colors in $S$, it follows that we can extend the $t$ color-disjoint rainbow spanning trees in $G'$ to $t$ edge-disjoint rainbow spanning trees in $G$.

\end{description}

Hence in all of the three cases, we obtain that $G$ has $t$ edge-disjoint rainbow spanning trees.

\end{proof}

%New results:
%%%%%%%%%%%%%%%%%%%%%%%%%%%%%%%%%%%%%%%%%%%%%%%%
% n = 2t+1 results
%%%%%%%%%%%%%%%%%%%%%%%%%%%%%%%%%%%%%%%%%%%%%%%%%
\subsection{Theorem \ref{anti-Ramsey} where $n=2t+1$}\label{sec:2t+1}

\begin{proposition}
	For positive integers $t \geq 1$ and $n=2t+1$, we have $r(n,t)={n-1\choose 2}=2t^2-t$.
\end{proposition}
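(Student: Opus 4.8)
The lower bound $r(2t+1,t)\ge\binom{n-1}{2}$ is exactly Proposition \ref{lower-bound}, so what remains is the upper bound: every edge-coloring of $G:=K_{2t+1}$ using $\binom{n-1}{2}+1=2t^2-t+1$ colors contains $t$ edge-disjoint rainbow spanning trees (the case $t=1$ is classical, so assume $t\ge 2$). Since $\binom{n}{2}=2t^2+t$, the color multiplicities satisfy $\sum_i(m_i-1)=2t-1$. Let $G_1$ be the spanning subgraph of all edges whose color has multiplicity at least $2$, and $G_2$ the (rainbow) subgraph of the remaining edges. Writing $s=|\{i:m_i\ge 2\}|$, we have $\|G_1\|=(2t-1)+s$, so $2t\le\|G_1\|\le 4t-2$, while $\|G_2\|=\binom n2-\|G_1\|\ge 2t^2-3t+2$ is large.

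The plan is to extract $t$ edge-disjoint rainbow forests $F_1,\dots,F_t\subseteq G_1$ that leave at most $t$ edges of $G_1$ uncovered, and then to apply Theorem \ref{extension}. Granting such forests, let $G'$ be $G$ with all edges whose color occurs in some $F_i$ deleted. Every color of $G_2$ has multiplicity $1$, hence cannot occur in any $F_i\subseteq G_1$; so $G_2\subseteq G'$ and $|c(\CR(P,G'))|\ge|\CR(P,G_2)|$ for every partition $P$. Using $E(G)=\bigl(\bigsqcup_i E(F_i)\bigr)\sqcup E(G_1\setminus(F_1\cup\dots\cup F_t))\sqcup E(G_2)$ and the fact that for $K_n$ the crossing-edge count is minimized by a partition with one part of size $n-|P|+1$ and the rest singletons, we get
\begin{align*}
|c(\CR(P,G'))|+\sum_{i=1}^t|\CR(P,F_i)|
&\ge |\CR(P,G_2)|+\sum_{i=1}^t|\CR(P,F_i)|\\
&= |\CR(P,G)|-|\CR(P,\,G_1\setminus(F_1\cup\dots\cup F_t))|\\
&\ge |\CR(P,G)|-t\\
&\ge \tbinom n2-\tbinom{n-|P|+1}{2}-t.
\end{align*}
Subtracting $t(|P|-1)$, the last expression is concave in $|P|$ and, since $n=2t+1$, vanishes at both $|P|=2$ and $|P|=n$; hence it is $\ge 0$ for all $2\le|P|\le n$. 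Thus \eqref{eq:ext} holds for every $P$, and Theorem \ref{extension} delivers the $t$ trees. Note that, unlike the case $n=2t+2$, it is essential here to keep the term $|\CR(P,G_2)|$: $G_2$ carries almost all the colors, and the cruder estimate that only controls $\|G_0\|$ (with $G_0=G-\bigcup_iF_i$) is far too weak.

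The heart of the matter is therefore the forest-extraction lemma. First deal with the rainbow obstruction coming from multiplicity alone: since $\sum_i(m_i-1)=2t-1<2t$, at most one color can have multiplicity exceeding $t$, and such a color has multiplicity at most $2t$; I would set aside the excess of that color (at most $t$ edges), leaving a subgraph $G_1'$ of $G_1$ in which every color has multiplicity $\le t$. Now $G_1'$ is simple, lies on $2t+1$ vertices, and has at most $4t-2\le t\bigl((2t+1)-1\bigr)$ edges, so $e(S)\le\binom{|S|}{2}\le t(|S|-1)$ for every $S$ with $|S|\le 2t$ and $e(V(G_1'))\le 4t-2\le 2t^2$; by the Nash--Williams--Tutte condition $G_1'$ decomposes into $t$ forests. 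The real work is to refine such a decomposition into $t$ \emph{rainbow} forests while keeping the number of uncovered edges (set-aside edges plus whatever the refinement discards) at most $t$. I would do this greedily, building $F_1,\dots,F_t$ in turn: when an edge of color $c$ is placed into $F_j$, route the other edges of color $c$ to forests not yet containing $c$ in which they do not close a cycle, using a Hall-type matching argument and a case split on the largest multiplicity, in the spirit of Lemma \ref{lem:1}.

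The step I expect to be the main obstacle is precisely this last one, because acyclicity and the rainbow/multiplicity restriction genuinely clash and cannot both be met by a cheap fix to a Nash--Williams decomposition. For example, already for $t=2$ a properly $3$-edge-colored $K_4$ sitting inside $K_5$ cannot be decomposed into two rainbow forests: any rainbow set of $3$ edges uses exactly one edge from each of the three color classes, these rainbow transversals are precisely the four stars $K_{1,3}$ and the four triangles, and the complement of a star in $K_4$ is a triangle — so two rainbow forests can cover only $5$ of the $6$ edges, leaving one uncovered. Hence the lemma must isolate the single possibly-heavy color and, in the regime where all multiplicities are small, argue from the sparse structure of $G_1'$ that at most $t$ edges are forced to remain uncovered; this bounded case analysis is where the work of the subsection lies.
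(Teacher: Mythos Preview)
Your reduction to Theorem \ref{extension} and the verification of \eqref{eq:ext} via the concavity argument are correct and agree with the paper line for line. The genuine gap is the forest-extraction lemma: you need $t$ edge-disjoint rainbow forests in $G_1$ leaving at most $t$ edges of $G_1$ uncovered, but you only sketch a route (a Nash--Williams decomposition of $G_1'$ followed by a ``rainbow refinement''), explicitly flag it as the obstacle, and do not carry it out. As your own $K_4$ example shows, the refinement step is not automatic, and you give no mechanism for bounding the number of discarded edges by the required amount.

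The paper's proof of this lemma is much shorter than your sketch and avoids Nash--Williams entirely. If $m_1\ge t+2$ then $s\le t-1$: put one edge of color $c_1$ in each $F_i$ and one edge of each of the remaining $s-1$ colors into a distinct $F_i$; every $F_i$ then has at most two edges (hence is automatically a rainbow forest in a simple graph) and exactly $\|G_1\|-(t+s-1)=t$ edges of $G_1$ remain. If $m_1\le t+1$, take any edge-maximal family of edge-disjoint rainbow forests in $G_1$ and suppose more than $t$ edges remain. Then $\sum_i\|F_i\|\le t+s-2\le 3t-3$, so some $F_j$ has at most two edges. If $\|F_j\|=1$, every leftover edge must share that single color (otherwise it could be added to $F_j$), forcing $m_1\ge t+2$. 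If $\|F_j\|=2$, at most one leftover edge can be blocked by acyclicity, so at least $t$ leftover edges carry one of the two colors of $F_j$; this gives $m_1+m_2\ge t+2$, hence $s\le t+1$, hence $\sum_i\|F_i\|\le 2t-1$, and now some forest has only one edge, reducing to the previous case. So the step you anticipated as hardest is in fact a short edge-maximality argument; your Nash--Williams-plus-refinement detour is unnecessary here.
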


\begin{proof}
	Again, the lower bound is due to Proposition \ref{lower-bound}.
	Now we prove that any edge-coloring of $K_{2t+1}$ with $2t^2-t+1$ distinct colors contains $t$ edge-disjoint rainbow spanning trees. 
	Call this edge-colored graph $G$. The proof approach is similar to the case when $n=2t+2$.
	Let $m_i$ be the multiplicity of the color $c_i$ in $G$.
	Without loss of generality,  say the first $s$ colors have multiplicity greater than or equal to $2$:
	$$m_1\geq m_2\geq \cdots \geq m_s\geq 2.$$
	Let $G_1$ be the spanning subgraph consisting of all edges whose color multiplicity is greater than 1 in $G$. Let $G_2$ be the spanning subgraph consisting of the remaining edges.
	We have
	\begin{equation}\label{eq:m2}
	\sum_{i=1}^s (m_i-1)={n\choose 2}-(2t^2-t+1)=2t-1.
	\end{equation}
	
	In particular, we have
	$$|E(G_1)|=\sum_{i=1}^s m_i=2t-1+s\leq 4t-2.$$
	\textbf{Claim:} we can construct $t$ edge-disjoint rainbow forests $F_1,\ldots, F_t$ in $G_1$ such that if we let $G_0 = G_1 \backslash \bigcup\limits_{i=1}^t E(F_i)$, then $|E(G_0)| \leq t$. Again, for the proof of the claim, we consider two cases:
	
	\begin{description}
		
		\item Case 1: $m_1 \geq t+2$. By equation (\ref{eq:m2}), we have that $s \leq (2t-1)-(t+1)+1 = t-1.$ We construct $t$ edge-disjoint rainbow forests $F_1, \cdots, F_t$ as follows: First take $t$ edges of color $c_1$ and add one edge to each of $F_1,\cdots F_t$. Next, pick one edge from each of the remaining $s-1$ colors and add each of them to a distinct $F_i$.
		
		Clearly, we can obtain $t$ edge-disjoint rainbow forests in this way. Furthermore, $$|E(G_0)| \leq 2t-1 + s - (t+s-1) = t.$$ which proves the claim.
		
		\item Case 2: $m_1 < t+2$. 
		Let $F_1,\ldots, F_t$ be the edge-maximal family of rainbow spanning forests in $G_1$. Let $G_0 = G_1 \backslash \bigcup\limits_{i=1}^t E(F_i)$. Suppose that $|E(G_0)| > t$. Then $$\sum_{i=1}^t |E(F_i)| \leq 2t-1+s-(t+1) = t+s-2.$$
		
		Since $s\leq 2t-1$, it follows that there exists some $j$ such that $|E(F_j)| \leq 2$. 
		\begin{description}
			\item Case 2a: $|E(F_j)| = 1$. Since $\{F_1,\ldots, F_t\}$ is edge-maximal and $|E(G_0)|\geq t+1$, it follows that all edges in $G_0$ share the same color (call it $c_1'$) as the single edge in $F_j$. Thus $m_1 \geq t+2$, which contradicts that $m_1 < t+2$ since we are in Case 2.

			\item Case 2b: $|E(F_j)| = 2$. Similarly, at least $t$ edges in $G_0$ share the same colors (call them $c_1'$, $c_2'$) as the two edges in $F_j$.	It follows that $m_1 + m_2 \geq t+2$. Hence 
			$s\leq t+1$.
			
			Now since $|E(G_0)|\geq t+1$, it follows that $$\sum_{i=1}^t |E(F_i)| \leq 2t-1+s -(t+1) = t+s-2 \leq 2t-1,$$ Hence there exists some forest with only one edge, in which case we are done by Case 2a.
			
		\end{description}
		Hence by contradiction, we obtain that $|E(G_0)| \leq t$, which completes the proof of the claim.\\
		
	\end{description}

	Now we show that  $F_1,\ldots, F_t$ have a color-disjoint extension to $t$ edge-disjoint rainbow spanning trees.
	Consider any partition $P$. We will verify
	$$|c(\CR(P),G_2)|+ \sum_{i=1}^t|\CR(P, F_i)|\geq t(|P|-1).$$
	We have
	$$|c(\CR(P),G_2)|+ \sum_{i=1}^t|\CR(P, F_i)| -t(|P|-1)
	\geq {n\choose 2}-t -{n-|P|+1 \choose 2}-t(|P|-1).$$
	Note that the function on right is concave downward on $|P|$.
	It is enough to verify it at $|P|=2$ an $|P|=n$.
	When $|P|=2$, we have
	$${n\choose 2}-t -{n-1\choose 2}-t =n-1-2t\geq 0.$$
	When $|P|=n$, we have
	$${n\choose 2}-t-t(n-1)=0.$$
	By Theorem \ref{extension}, $F_1,\ldots, F_t$ have a color-disjoint extension to $t$ edge-disjoint rainbow spanning trees.
\end{proof}

{\bf Acknowledgement} The authors thank an anonymous referee
for the valuable comments, in particular, for pointing out that Schrijver's theorem implies Theorem 3.

%-------------------------------------------------------------------------------------------------------------------------
% Bibliography
%-------------------------------------------------------------------------------------------------------------------------

%-------------------------------------------------------------------------------------------------------------------------

\end{document}